\newcommand{\E}{\mathcal{E}}
\newcommand{\EE}{\mathbb{E}}
\newcommand{\PP}{\mathbb{P}}
\newcommand{\RR}{\mathbb{R}}
\renewcommand{\geq}{\geqslant}
\renewcommand{\leq}{\leqslant}
\renewcommand{\epsilon}{\varepsilon}
\def \sclr#1#2{\langle #1,#2\rangle}
\newtheorem{thm}{Theorem} 
\newtheorem{cor}[thm]{Corollary}
\newtheorem{lem}[thm]{Lemma}
\newtheorem{rem}[thm]{Remark}
\title[Survival probabilities of random walks with drift inside a pyramid]{Random walks with drift inside a pyramid: convergence rate for the survival probability}
\author{Rodolphe Garbit}
\address{Universit\'e d'Angers, CNRS, Laboratoire Angevin de Recherche en Math\'ematiques, SFR MATHSTIC, 49000 Angers, France}
\email{rodolphe.garbit@univ-angers.fr}
\email{raschel@math.cnrs.fr}
\author{Kilian Raschel}
\thanks{This work has received funding from the European Research Council (ERC) under the European Union's Horizon 2020 research and innovation programme under the Grant Agreement No.\ 759702, from Centre Henri Lebesgue, programme ANR-11-LABX-0020-0 and from the project DeRerumNatura, programme ANR-19-CE40-0018}
\keywords{Random walks in cones; Survival probabilities; Laplace transform; Pyramids; Overshoots of random walks}
\subjclass{60G50,60G40,60E10,52B11}
\date{\today}
\begin{document}

\begin{abstract}
We consider multidimensional random walks in pyramids, which by definition are cones formed by finite intersections of half-spaces. The main object of interest is the survival probability $\PP(\tau>n)$, $\tau$ denoting the first exit time from a fixed pyramid. When the drift belongs to the interior of the cone, the survival probability sequence converges to the non-exit probability $\PP(\tau=\infty)$, which is positive. In this note, we quantify the speed of convergence, and prove that the exponential rate of convergence may be computed by means of a certain min-max of the Laplace transform of the random walk increments. We illustrate our results with various examples.
\end{abstract}

\maketitle 

%%%%%%%%%%%%%%%%%%%%%%%%%%%%%%%%%%%%%%%%%%%%%%%%%%%%%%%%%%%%%%%%%%%%%%%%%
\section{Introduction and main results}

\subsection*{A glimpse of our results}
For a $d$-dimensional random walk $(S_n)_{n\geq 0}$ with integrable and independent increments $X_n=S_n-{S_{n-1}}$ having  common distribution $\mu$, we consider the survival probabilities
\begin{equation}
\label{eq:gen_func}
  \PP^x(\tau>n),
\end{equation}
where $\tau$ denotes the first exit time from a given cone $K$, i.e.
\begin{equation*}
   \tau=\inf\{n>0 \vert S_n\notin K\},
\end{equation*}   
and $\PP^x$ is a probability distribution under which the random walk starts at $S_0=x$, with $x\in K$.

When the drift $m=\EE X_1$ belongs to the interior $K^o$ of the cone $K$, the non-exit probability $\PP^x(\tau=\infty)$, which is the limit of the sequence \eqref{eq:gen_func}, is positive (see \cite[Lem.~8]{GaRa22} for example). In this note, our main result quantifies the speed of convergence in the following way:
\begin{equation}
\label{eq:two-term_estimate}
   \PP^x(\tau>n)=\PP^x(\tau=\infty)+\rho^n B_n,
\end{equation}
where the exponential rate $\rho\in (0,1)$, and $B_n$ satisfies $\sqrt[n]{B_n}\to 1$ and $B_n\to 0$.
The precise statement is given in Theorem~\ref{thm:inner_drift} below.
The rate $\rho$ is computed in terms of a certain min-max of the Laplace transform of $\mu$.

In the special case of small step walks in $\mathbb Z^d$ (i.e.~when the support of $\mu$ is a subset of $\{-1,0,1\}^d$) in the orthant $\mathbb Z_+^d$, this result was previously obtained in~\cite[Thm~4]{GaRa22}. 

In the present paper, we consider general probability distributions $\mu$ with all exponential moments and polyhedral convex cones, i.e.~finite intersection of half-spaces, which for short we will call pyramids; see examples on Figures~\ref{fig:example_dual} and~\ref{fig:examples_introduction}.

One initial motivation to obtain formula~\eqref{eq:two-term_estimate} is the following consequence on the generating function
\begin{equation}
\label{eq:survival_series}
    \phi(z)=\sum_{n=0}^{\infty} \PP^x(\tau>n) z^n.
\end{equation}
If the survival probabilities behave as in~\eqref{eq:two-term_estimate}, then the generating function~\eqref{eq:survival_series} can not be a rational function, as shown in \cite{GaRa22} using singularity analysis. The question of proving rationality (and various refinements, such as algebraicity) of generating functions as above is inspired by the combinatorial work \cite{BoMi10}, where the rational nature of series as in \eqref{eq:survival_series} is used to measure the complexity of the associated combinatorial problem.

\subsection*{Technical assumptions}
In order to present the hypotheses under which we shall prove our main results, we introduce two objects, through which the exponential rate $\rho$ in \eqref{eq:two-term_estimate} will be determined:
\begin{itemize}
\item the Laplace transform $L$ of the increment distribution $\mu$:
\begin{equation*}
\label{eq:def_Laplace_transform}
     L(t)=\EE \bigl( e^{\sclr{t}{X_1}}\bigr) =\int_{\mathbb R^d}e^{\sclr{t}{y}}\mu({d}y),
\end{equation*}
\item the dual cone $K^*$ associated with $K$ (see Figure~\ref{fig:example_dual} for an example):
\begin{equation*}
\label{eq:def_dual_cone}
     K^*=\{x\in\mathbb R^d \vert \sclr{x}{y}\geq 0 \mbox{ for all } y\in K\}.
\end{equation*}
Obviously, $K^*$ is a closed convex cone.
\end{itemize}

\begin{figure}
\centering
\begin{tikzpicture}
\begin{scope}[scale=0.5]
\draw[red!30,fill=red!30] (0.1,0) -- (6.5,0) -- (5.54,3.44);
\draw[red!30,fill=red!30,domain=0:32.5] plot ({6.5*cos(\x)}, {6.5*sin(\x)});
\draw[blue!30,fill=blue!30,domain=33.5:90] plot ({5.5*cos(\x)}, {5.5*sin(\x)});
\draw[blue!30,fill=blue!30,domain=-56.5:0] plot ({5.5*cos(\x)}, {5.5*sin(\x)});
\draw[blue!30,fill=blue!30] (0,-0.08) -- (5.5,-0.08) -- (3.04,-4.59);
\draw[blue!30,fill=blue!30] (0,0.1) -- (0,5.5) -- (4.65,3);
\draw[white, thick] (-1.5,-1.5) grid (6.5,6.5);
\draw[white, thick] (-1.5,1.5) grid (6.5,-4.5);
%\draw[red,thick,domain=0:32.5] plot ({6.5*cos(\x)}, {6.5*sin(\x)});
\draw[blue,densely dotted,domain=0:32.5] plot ({5.5*cos(\x)}, {5.5*sin(\x)});
\draw[blue!30,thick,domain=33:90] plot ({5.5*cos(\x)}, {5.5*sin(\x)});
\draw[blue!30,thick,domain=-56.5:0] plot ({5.5*cos(\x)}, {5.5*sin(\x)});
%\draw[white] (0,0) rectangle (5.5,5.5);
%\draw[white, thick] (-1.5,-1.5) grid (5.5,5.5);
\draw[red,thick] (0,0) -- (5.5,3.47);
\draw[red,thick] (0,0.03) -- (6.5,0.03);
\draw[blue,thick] (0,0) -- (3.04,-4.59);
\draw[blue,thick] (0.03,0) -- (0.03,5.5);
\draw[->,thick] (0,-5) -- (0,7.5);
\draw[->,thick] (-1.5,0) -- (7.5,0);
\draw[green] (0,0.3) -- (0.3,0.3);
\draw[green] (0.3,0) -- (0.3,0.3);
\draw[green] (0.34,0.22) -- (0.56,-0.11);
\draw[green] (0.22,-0.33) -- (0.56,-0.11);
\node[above right] at (1.75,0.1) {\textcolor{red}{$K$}};
\node[above right] at (1.75,-1.1) {\textcolor{blue}{$K^*$}};
\end{scope}
\end{tikzpicture}
\caption{A cone $K$ (in red) and its dual cone $K^*$ (in blue)}
\label{fig:example_dual}
\end{figure}
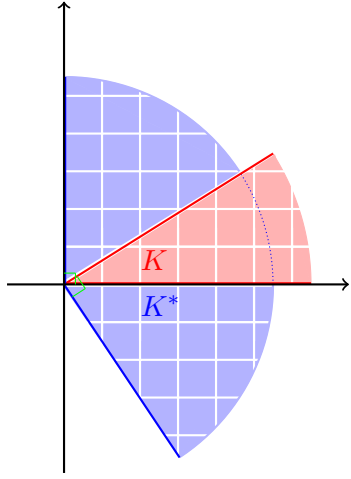

We will also use extensively the notation $D_u$ for the closed half-space with inner normal $u\in \mathbb{S}^{d-1}$, i.e.
\begin{equation*}
    D_u=\{y\in\RR^d \vert \sclr{y}{u}\geq 0\}.
\end{equation*}
Note that $K\subset D_u$ if and only if $u\in K^*$.
Disclaimer: when using the notation $D_u$, it is understood that $u$ belongs to the sphere $\mathbb{S}^{d-1}$ (in particular $u\neq 0$).

Throughout this paper, we make the following assumptions on the cone $K$ and the distribution $\mu$ of the random walk increments:
\begin{enumerate}[label=(A\arabic{*}),ref=(A\arabic{*})]
   \item\label{hyp:A1}[Cone] The cone $K$ is a \textit{finite intersection} of closed half-spaces $D_u$,  where $u$ varies in a finite subset $S$ of $\mathbb{S}^{d-1}$, hence $K=\cap_{u\in S} D_{u}$, and it has a non-empty interior. We call this type of cone a closed \textit{pyramid}. See Figure~\ref{fig:examples_introduction} for examples.
   \item\label{hyp:A2}[Adaptation to the dimension] The random walk is truly $d$-dimensional, i.e.~there is no $u\not=0$ such that $\sclr{u}{X_1}=0$ almost surely.
   \item\label{hyp:A3}[Adaptation to the cone] The random walk started at zero can reach the interior $K^o$ of the cone: there exists $k>0$ such that $\PP^0(\tau> k, S_k \in K^o)>0$.
   \item\label{hyp:A4}[Exponential moments] The random walk increments have all exponential moments. In other words, the Laplace transform is finite everywhere on $\RR^d$.
   \item\label{hyp:A5}[Non-triviality] The random walk is not trapped in the cone: $\PP(X_1\in K)<1$. (If $\PP(X_1 \in K)=1$, then $\PP^x(\tau>n)=1$ for all $n$ and there is nothing more to say.)
\end{enumerate}
We call $m=\EE X_1=\int y \mu({d}y)$ the drift.

\begin{figure}
    \centering
    \includegraphics[scale=0.33]{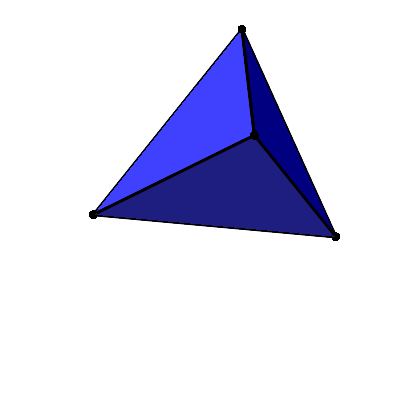}
    \includegraphics[scale=0.33]{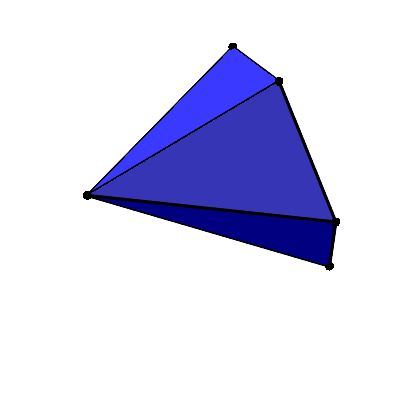}
    \includegraphics[scale=0.33]{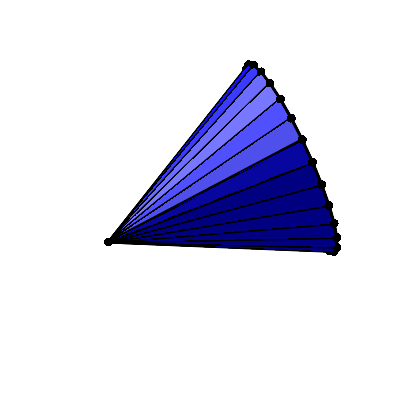}
    \vspace{-12mm}
    \caption{Examples of (truncated) pyramids in dimension $3$}
    \label{fig:examples_introduction}
\end{figure}

\subsection*{Precise statements}
Our main result is the following:

\begin{thm}
\label{thm:inner_drift}
Assume hypotheses \ref{hyp:A1}--\ref{hyp:A5} above, $K=\cap_{u\in S} D_{u}$, and $m\in K^o$. The subset $S'\subset S$ of directions $u$ such that the equation $L(su)=1$ has a solution $s=s_u<0$ is non-empty, and
\begin{equation*}
   \PP^x(\tau>n)=\PP^x(\tau=\infty)+\rho^n B_n,
\end{equation*}
with
\begin{equation}
\label{eq:value_rho}
   \rho=\max_{u\in S'}\min_{z\in K^*}L(t_u+z)\in (0,1),
\end{equation}
where $t_u=s_u u$  and $B_n$ satisfies $\sqrt[n]{B_n}\to 1$ together with $B_n\to 0$.
\end{thm}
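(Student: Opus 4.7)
The identity $\PP^x(\tau>n)-\PP^x(\tau=\infty)=\PP^x(n<\tau<\infty)$ reduces the problem to controlling the probability of an exit time that is finite but larger than $n$. Since $K=\bigcap_{u\in S}D_u$, on $\{\tau=k\}$ there must exist $u\in S$ with $\sclr{u}{S_k}<0$, so the union bound
$$\PP^x(n<\tau<\infty)\leq \sum_{u\in S}\sum_{k>n}\PP^x\bigl(\tau=k,\ \sclr{u}{S_k}<0\bigr)$$
lets me argue face by face. A preliminary convexity argument shows that only $u\in S'$ matter: if $u\notin S'$, then $L(0)=1$, $L'(0)\cdot u=\sclr{m}{u}>0$, convexity, and the absence of a negative root together force $L(su)<1$ on $(-\infty,0)$, which by \ref{hyp:A4} and monotone convergence forces $\sclr{u}{X_1}\geq 0$ a.s., so face $u$ is never crossed. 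Assumption \ref{hyp:A5} produces some $u\in S$ with $\PP(\sclr{u}{X_1}<0)>0$; for such a $u$, $L(su)\to\infty$ as $s\to-\infty$ and the intermediate value theorem yields $s_u<0$ with $L(s_uu)=1$, so $S'\neq\emptyset$.

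\textbf{Upper bound via Chernoff and a $K^*$-tilt.} Fix $u\in S'$ and $z\in K^*$. Conditioning on $\mathcal F_{k-1}$ and using the elementary bound $\mathbf 1_{A<0}\leq e^{s_uA}$, one has
$$\mu\bigl(\{y:\sclr{u}{S_{k-1}+y}<0\}\bigr)\leq e^{s_u\sclr{u}{S_{k-1}}}L(s_uu)=e^{\sclr{t_u}{S_{k-1}}}.$$
On $\{\tau>k-1\}$ we have $S_{k-1}\in K$, so $\sclr{z}{S_{k-1}}\geq 0$ and the factor $e^{\sclr{z}{S_{k-1}}}\geq 1$ may be inserted for free. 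Taking expectation and using independence of the increments yields
$$\PP^x\bigl(\tau=k,\sclr{u}{S_k}<0\bigr)\leq \EE^x\bigl[e^{\sclr{t_u+z}{S_{k-1}}}\bigr]=e^{\sclr{t_u+z}{x}}L(t_u+z)^{k-1}.$$
Since $u\in S\subset K^*$, choosing $z=\varepsilon u$ shows $L(t_u+\varepsilon u)=L((s_u+\varepsilon)u)<1$ for small $\varepsilon>0$, by strict convexity of $s\mapsto\log L(su)$ between its zeros $s_u$ and $0$; the minimum $\rho_u=\min_{z\in K^*}L(t_u+z)$ is therefore in $(0,1)$ and is attained thanks to the coercivity of $L$ that follows from \ref{hyp:A2}. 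Summing the geometric series over $k>n$ and taking the max over $u\in S'$ gives $\PP^x(n<\tau<\infty)\leq C\rho^n$, so $B_n$ is bounded and $\limsup\sqrt[n]{B_n}\leq 1$.

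\textbf{Lower bound and main obstacle.} To match this upper bound and to strengthen ``$B_n$ bounded'' into $B_n\to 0$, I would use the Cram\'er change of measure with tilt $t^\star=t_{u^\star}+z^\star$ realizing the min-max $\rho$. The first-order optimality of $z^\star$ on $K^*$ places $\nabla L(t^\star)$ in the bipolar $(K^*)^*=K$, so the tilted walk has drift $\tilde m\in K$. Combining positive survival of this tilted walk in $K$ (via \ref{hyp:A3} when $\tilde m\in K^o$) with a local CLT (justified by \ref{hyp:A2}--\ref{hyp:A4}) to control the density of $S_n$ near a well-chosen location, and then inverting the tilt, yields trajectories in $\{n<\tau\leq n+O(1)\}$ of probability at least $c\,\rho^n/n^\alpha$; this both matches the upper bound and supplies the polynomial prefactor forcing $B_n\to 0$. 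The genuine difficulty is this lower bound: when $z^\star\in\partial K^*$, the tilted drift $\tilde m$ can lie on $\partial K$, so \ref{hyp:A3} does not directly deliver survival, and one must perturb the tilt (or introduce a Doob $h$-transform) while keeping tight control of the exponential weight coming out of the change of measure.
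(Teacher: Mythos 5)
Your decomposition into faces and the convexity argument for $S'$ are fine, and the Chernoff step yields a valid inequality, but there is a genuine gap: the upper bound only delivers $\Delta_n\leq C\rho^n$, hence $B_n$ bounded, not $B_n\to 0$, because the indicator of $\{\tau>k-1\}$ is dropped before taking expectations. Your lower-bound sketch cannot close this gap: showing $\Delta_n\geq c\rho^n n^{-\alpha}$ only forces $B_n\geq c n^{-\alpha}$, which is entirely compatible with $B_n$ staying near a positive constant; to obtain $B_n\to 0$ one needs a \emph{sharper upper} bound of order $o(\rho^n)$, which your argument does not provide. The sentence ``this both matches the upper bound and supplies the polynomial prefactor forcing $B_n\to 0$'' is internally inconsistent: if your two bounds matched at order $\rho^n$ there would be no room for a vanishing prefactor.

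The paper's key device, absent from your proposal, is to tilt at the level set $\{L=1\}$ rather than at the min-max point. With $t_u=s_u u$ (so $L(t_u)=1$, and the Cram\'er factor $L(t_u)^k$ is identically one) the new drift $m_*=\nabla L(t_u)$ satisfies $\sclr{m_*}{u}<0$: it points out of $D_u\supset K$. The exit time $\sigma_u$ is then a.s.\ finite under $\PP_*$, the weight $e^{-\sclr{t_u}{S_{\sigma_u}}}$ is a bounded and uniformly positive function of the one-dimensional overshoot (Lemma~\ref{lem:overshoot}), and one sandwiches $\PP^x(n<\tau,\sigma_u<\infty)$ between constant multiples of $\PP_*^x(\tau>n)$. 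Since $m_*\notin K^o$, Theorem~\ref{thm:exp_rate_and_zero_term} applies and delivers both the rate $\rho_u=\min_{z\in K^*}L(t_u+z)$ and the convergence $B_n\to 0$ in one stroke. Your choice of tilt $t^{\star}=t_{u^{\star}}+z^{\star}$, with $L(t^{\star})=\rho<1$, pushes the new drift into $K$, possibly onto $\partial K$ --- exactly the degenerate configuration you yourself identify as the hard part, and for which no ready-made result is available.
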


\begin{rem}
\label{rem:autre_carac_de_S'}
Under the hypotheses of Theorem \ref{thm:inner_drift}, the set $S'$ is also characterized as being the subset of all $u$ in $S$ such that $\PP(X_1\in D_u)<1$. See Lemma \ref{lem:new_drift_on_gamma} for a proof.
\end{rem}

\begin{rem}
\label{rem:choice}
It might not be clear at first sight why the expression \eqref{eq:value_rho} for $\rho$ doesn't depend on the set $S$ as long as $K=\cap_{u\in S} D_{u}$. The reason is that one set $S_0$ among such $S$ is minimal with respect to inclusion, and any vector in $S$ can be written as a non-negative linear combination of vectors in $S_0$. This combined with further basic properties of the convex function $t\mapsto \min_{z\in K^*}L(t+z)$ makes it possible to deduce that the maximum on $\{t_u \vert u\in S'\}$ is reached on  $\{t_u \vert u\in S_0'\}$. The set $S_0$ can be characterized as the set of extremal directions of $K^*$. This is explained in Appendix~\ref{sec:supp_mat};  its reading is not necessary for the understanding of the proof of Theorem~\ref{thm:inner_drift}.
\end{rem}

Using similar singularity analysis technniques as in \cite{GaRa22}, the estimate obtained in Theorem~\ref{thm:inner_drift} yields the following: 
\begin{cor}
Assume hypotheses \ref{hyp:A1}--\ref{hyp:A5} above, $K=\cap_{u\in S} D_{u}$, and $m\in K^o$. Then the generating function~\eqref{eq:survival_series} is not rational.
\end{cor}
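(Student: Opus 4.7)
My plan is to argue by contradiction: assume that $\phi$ is a rational function and use the two-term estimate of Theorem~\ref{thm:inner_drift} to derive incompatible asymptotics, following the singularity analysis scheme of \cite{GaRa22}.

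First I would identify the unavoidable pole of $\phi$ at $z=1$. The elementary identity
\[
  (1-z)\,\phi(z) = 1 - \sum_{n \geq 1} \PP^x(\tau = n)\, z^n
\]
extends continuously to $z=1$ with value $\PP^x(\tau = \infty) > 0$, so $\phi$ has a simple pole there with residue $-\PP^x(\tau=\infty)$. Subtracting this singular part defines
\[
  \psi(z) := \phi(z) - \frac{\PP^x(\tau = \infty)}{1-z} = \sum_{n \geq 0} \rho^n B_n\, z^n,
\]
which would inherit rationality from $\phi$. Since $\sqrt[n]{B_n} \to 1$, Cauchy--Hadamard pins the radius of convergence of $\psi$ to $1/\rho \in (1, \infty)$, and so $\psi$ must carry at least one pole on the circle $\{|z|=1/\rho\}$.

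The core step is a partial fraction expansion. Writing the hypothetical rational $\psi$ in partial fractions, one obtains for $n$ sufficiently large
\[
  \rho^n B_n = \sum_{k=1}^m Q_k(n)\, z_k^{-n},
\]
where $z_1,\dots,z_m$ are the poles of $\psi$ and each $Q_k$ is a polynomial of degree strictly less than the multiplicity of $z_k$. Separating the poles of minimal modulus and dividing by $\rho^n$ yields, writing $z_k = \rho^{-1} e^{i\theta_k}$ for the extremal poles,
\[
  B_n = \sum_{|z_k| = 1/\rho} Q_k(n)\, e^{-i n \theta_k} + O(r^n)
\]
for some $r \in (0,1)$.

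The last step exploits the full strength of Theorem~\ref{thm:inner_drift}, namely $B_n \to 0$. The error term is exponentially small, so the oscillatory sum $\sum_{|z_k|=1/\rho} Q_k(n)\, e^{-i n \theta_k}$ must itself tend to zero. By the classical linear independence of the functions $n \mapsto n^j e^{-i n \theta}$ over distinct $\theta$'s and non-negative integers $j$, every polynomial $Q_k$ attached to a pole of minimal modulus must vanish identically, contradicting the existence of a singularity on $\{|z|=1/\rho\}$ established above. I expect the only delicate point to be a clean invocation of this linear independence lemma; the rest of the scheme is identical to the argument already carried out for small-step walks in \cite[Section 4]{GaRa22}.
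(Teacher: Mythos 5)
Your argument is correct and is, as far as one can tell from the paper's terse reference, essentially the proof that the authors have in mind: they explicitly state that the corollary follows from Theorem~\ref{thm:inner_drift} ``using similar singularity analysis techniques as in \cite{GaRa22},'' and your chain of steps -- extract the simple pole at $z=1$ with residue $-\PP^x(\tau=\infty)$, observe that $\psi(z)=\sum_n \rho^n B_n z^n$ would be rational with radius of convergence exactly $1/\rho\in(1,\infty)$ by Cauchy--Hadamard and $\sqrt[n]{B_n}\to 1$, expand in partial fractions, isolate the dominant poles, and use $B_n\to 0$ together with linear independence of $n\mapsto n^j e^{-in\theta}$ (via a Ces\`aro averaging argument) to force the leading coefficients to vanish -- is precisely that singularity-analysis scheme. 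The only point you flag as delicate, the invocation of the linear independence lemma, does go through cleanly; no gap.
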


Although we are not able to extend Theorem~\ref{thm:inner_drift} to the case of cones which are not pyramids, we conjecture that the same conclusion holds, provided the formula~\eqref{eq:value_rho} for $\rho$ be replaced by
\begin{equation*}
   \rho=\max_{t\in M}\min_{z\in K^*}L(t+z),
\end{equation*}
where $M=(-K^*)\cap\{L=1\}\setminus\{0\}$.

The case of a drift $m\notin K^o$ is considered in \cite{GaRa22}, and our proof of Theorem~\ref{thm:inner_drift} is based on that previous result, which we now state for convenience:

\begin{thm}[Theorem~3 in \cite{GaRa22}]
\label{thm:exp_rate_and_zero_term}
Assume hypotheses \ref{hyp:A1}--\ref{hyp:A4} above and that $L$ is coercive on $K^*$. If $m\notin K^o$, then 
\begin{equation*}
   \PP^x(\tau>n)=\rho^n B_n,
\end{equation*}
where
\begin{equation*}
    \rho=\min_{z\in K^*} L(z)\in (0,1],
\end{equation*}
and $B_n$ satisfies $\sqrt[n]{B_n}\to 1$ and $B_n\to 0$.
Moreover $\rho<1$ if and only if $m\notin \partial K$.
\end{thm}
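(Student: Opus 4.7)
My plan is to identify $\rho$ as the minimum of the Laplace transform on the dual cone $K^\ast$, prove the upper bound $\PP^x(\tau>n)\leq C\rho^n$ by a Chernoff estimate, and obtain the matching subexponential lower bound by an exponential change of measure that turns the drift into a tangential vector on $\partial K$.

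\emph{Convex-analytic setup.} Since $L$ is continuous, finite everywhere (A4), strictly convex (A2), and coercive on $K^\ast$, the value $\rho=\min_{z\in K^\ast}L(z)$ is attained at a unique $t^\ast\in K^\ast$. The first-order variational inequality on the cone, $\sclr{\nabla L(t^\ast)}{s-t^\ast}\geq 0$ for every $s\in K^\ast$, tested with $s=0$ and $s=2t^\ast$, yields both $\sclr{\nabla L(t^\ast)}{t^\ast}=0$ and $\nabla L(t^\ast)\in (K^\ast)^\ast=K$. For the qualitative claim, Jensen's inequality gives $L(z)\geq e^{\sclr{z}{m}}$: if $m\in\overline{K}$ then $L\geq 1$ on $K^\ast$, so $\rho=1$ and $t^\ast=0$; if $m\notin\overline{K}$, Hahn--Banach produces $z\in K^\ast$ with $\sclr{z}{m}<0$, and the directional derivative of $L$ at $0$ along $z$ is negative, forcing $\rho<1$. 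Under the hypothesis $m\notin K^o$, this dichotomy becomes exactly $\rho<1\Leftrightarrow m\notin\partial K$, while $\rho>0$ is immediate from $L>0$.

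\emph{Upper bound via Chernoff.} For any $w\in K^\ast$, $\{S_n\in K\}\subseteq\{\sclr{w}{S_n}\geq 0\}$, so Markov's exponential inequality gives
\begin{equation*}
\PP^x(\tau>n)\leq \PP^x(\sclr{w}{S_n}\geq 0)\leq \EE^x\bigl[e^{\sclr{w}{S_n}}\bigr]=e^{\sclr{w}{x}}L(w)^n.
\end{equation*}
Choosing $w=t^\ast$ yields $\PP^x(\tau>n)\leq e^{\sclr{t^\ast}{x}}\rho^n$, hence $\limsup_n B_n^{1/n}\leq 1$ with $B_n:=\rho^{-n}\PP^x(\tau>n)$.

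\emph{Tilting.} Introduce the exponentially tilted measure $\widetilde\PP$ with Radon--Nikodym derivative $\exp(\sclr{t^\ast}{S_n-x})/\rho^n$ on $\mathcal F_n$. Under $\widetilde\PP$ the increments are i.i.d., still satisfy A2 and A4 (the new Laplace transform is $\widetilde L(\cdot)=L(t^\ast+\cdot)/\rho$), and have drift $\widetilde m=\nabla L(t^\ast)/\rho\in K$ with $\sclr{t^\ast}{\widetilde m}=0$. Rewriting gives the identity
\begin{equation*}
\PP^x(\tau>n)=\rho^n\,e^{\sclr{t^\ast}{x}}\,\widetilde\EE^x\bigl[e^{-\sclr{t^\ast}{S_n}}\mathbb{1}_{\tau>n}\bigr],
\end{equation*}
which identifies $B_n$ (up to a constant) with the tilted expectation on the right, a quantity bounded by $1$ since $\sclr{t^\ast}{S_n}\geq 0$ on $\{\tau>n\}$. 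The convergence $B_n\to 0$ reduces to $\widetilde\PP^x(\tau=\infty)=0$: when $t^\ast\neq 0$, the orthogonality $\sclr{t^\ast}{\widetilde m}=0$ combined with the strict positivity of $\sclr{w}{y}$ for $y\in K^o,\,w\in K^\ast\setminus\{0\}$ forbids $\widetilde m\in K^o$, so $\widetilde m\in\partial K$; then for some active normal $u$, the one-dimensional walk $\sclr{u}{S_n}$ is centered and oscillates, visiting negative values almost surely, which forces $\tau<\infty$. The case $t^\ast=0$ corresponds to $m\in\partial K$ and is handled by the same oscillation argument on the original walk. Bounded convergence then yields $B_n\to 0$.

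\emph{Main obstacle: the subexponential lower bound $B_n^{1/n}\to 1$.} On the event $\mathcal A_n=\{\tau>n,\ \sclr{t^\ast}{S_n}\leq n^{3/4}\}$ one has $e^{-\sclr{t^\ast}{S_n}}\geq e^{-n^{3/4}}$, so $B_n\geq c\,e^{-n^{3/4}}\,\widetilde\PP^x(\mathcal A_n)$, and Chebyshev applied to the centered one-dimensional random walk $\sclr{t^\ast}{S_n}$ (variance finite and positive by A4 and A2) bounds $\widetilde\PP^x(\sclr{t^\ast}{S_n}>n^{3/4})$ by $O(n^{-1/2})$. What remains is the polynomial lower bound $\widetilde\PP^x(\tau>n)\geq c\,n^{-\alpha}$ for the tilted walk, and this is the technical heart of the argument: the tilted drift $\widetilde m$ is tangential ($\widetilde m\in\partial K$ or $\widetilde m=0$), which places $\widetilde S$ in the critical regime where Denisov--Wachtel-type polynomial lower bounds for random walks in cones apply, after (if necessary) a dimension reduction onto the face of $K$ containing $\widetilde m$; hypothesis A3 is used here to drive the walk into $K^o$ in a bounded number of steps so that the polynomial bounds can be started from an interior point. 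Combining gives $B_n\geq c\,n^{-\alpha-1/2}\,e^{-n^{3/4}}$, hence $B_n^{1/n}\to 1$, completing the proof.
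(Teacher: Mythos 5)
A preliminary remark on the comparison: the paper does not prove this statement at all --- it is imported as Theorem~3 of \cite{GaRa22}, whose proof builds on \cite{GaRa16} --- so your attempt must be measured against that external argument. Your architecture is the standard one and matches it in spirit: take the unique minimizer $t^\ast$ of $L$ on $K^\ast$, get the upper bound $\PP^x(\tau>n)\leq e^{\sclr{t^\ast}{x}}\rho^n$ by Markov's inequality, tilt by $t^\ast$ so that the new drift $\nabla L(t^\ast)/\rho$ lies in $K$ and is orthogonal to $t^\ast$, and identify $B_n$ with a tilted expectation. The convex-analytic dichotomy for $\rho<1$, the Chernoff upper bound, and the proof that $B_n\to 0$ (tilted drift on $\partial K$, oscillation of the projection on an active normal of the pyramid, dominated convergence) are correct.

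The gap is in the lower bound $\liminf_n B_n^{1/n}\geq 1$, and it is twofold. First, the assembly is invalid as written: you need a lower bound on $\widetilde{\PP}^x(\mathcal{A}_n)$ with $\mathcal{A}_n=\{\tau>n,\ \sclr{t^\ast}{S_n}\leq n^{3/4}\}$, but all your two estimates give is $\widetilde{\PP}^x(\mathcal{A}_n)\geq \widetilde{\PP}^x(\tau>n)-\widetilde{\PP}^x(\sclr{t^\ast}{S_n}>n^{3/4})\geq c\,n^{-\alpha}-C\,n^{-1/2}$, which is vacuous as soon as $\alpha\geq 1/2$ --- and $\alpha\geq 1/2$ is the typical situation (a centered walk in a quadrant already has $\alpha\geq 1$). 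The stated conclusion $B_n\geq c\,n^{-\alpha-1/2}e^{-n^{3/4}}$ reads as if the two events were independent, which you have not shown; moreover conditioning on survival pushes the walk away from the faces and hence tends to \emph{increase} $\sclr{t^\ast}{S_n}$, so no cheap correlation inequality rescues the product. The step can be repaired: replace the threshold $n^{3/4}$ by $\epsilon n$ and Chebyshev by Chernoff --- under $\widetilde{\PP}$ the projection $\sclr{t^\ast}{S_n}$ is centered with all exponential moments, so $\widetilde{\PP}^x(\sclr{t^\ast}{S_n}>\epsilon n)\leq e^{-c(\epsilon)n}$, which is beaten by any subexponential lower bound on $\widetilde{\PP}^x(\tau>n)$ and yields $\liminf_n B_n^{1/n}\geq e^{-\epsilon}$ for every $\epsilon>0$. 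Second, even after that repair, the remaining ingredient --- that $\widetilde{\PP}^x(\tau>n)$ decays subexponentially (a polynomial rate is not needed) for a walk whose drift lies on $\partial K$, possibly zero or in a lower-dimensional face --- is exactly the hard core of the theorem: it is its own $\rho=1$ case, since after tilting $\min_{K^\ast}\widetilde{L}=\widetilde{L}(0)=1$. Your appeal to Denisov--Wachtel ``after dimension reduction onto the face'' does not settle it: those results concern zero drift, and the reduction is not routine because the components of the walk along and transverse to the face are dependent. As it stands, the heart of the lower bound is asserted rather than proved.
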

In Theorem~\ref{thm:exp_rate_and_zero_term} and throughout the manuscript, a non-negative, convex function $f:\mathbb R^d\to \mathbb R$ is said to be coercive on a cone $C$ if $f(x)\to +\infty$ as $\Vert x\Vert\to +\infty$, $x\in C$.

%The case of a drift belonging to the boundary of $K$ is a limiting case of both Theorems~\ref{thm:inner_drift} %and~\ref{thm:exp_rate_and_zero_term}. It may be checked that in this case, the rate $\rho$ is equal to $1$.

\section{Examples of application of Theorem~\ref{thm:inner_drift}}

In this section, we give various illustrations of Theorem~\ref{thm:inner_drift}.

\subsection{Small step examples with uniform distribution}
All examples presented in Table~\ref{tab:liste} below are small step walks in the plane, confined to the cone $K=\mathbb R_+^2$, with uniform distribution. By definition, two-dimensional small step models have a support included in the set of the eight nearest neighbors $\{-1,0,1\}^2\setminus \{(0,0)\}$.

In Table~\ref{tab:liste}, we use the notation of Theorem~\ref{thm:inner_drift}; for example, $s_{(1,0)}$ denotes the unique negative point such that $L(s_{(1,0)},0)=1$. We further introduce
\begin{equation}
    \label{eq:def_rho_10}
    \rho_{(1,0)} = \min_{z\in \mathbb R_+^2}L(t_{(1,0)}+z),
\end{equation}
with $t_{(1,0)} = (s_{(1,0)},0)$. The quantities $s_{(0,1)}$ and $\rho_{(0,1)}$ are defined similarly. The rate $\rho$ is as in \eqref{eq:value_rho} and satisfies $\rho=\max \{\rho_{(1,0)},\rho_{(0,1)}\}$.

In the list of $79$ intrinsically different models of walks in the quarter plane established in \cite{BoMi10}, exactly $12$ of them have a drift inside of the quadrant. For each of these $12$ models, we compute $s_{(1,0)}$, $\rho_{(1,0)}$, $s_{(0,1)}$, $\rho_{(0,1)}$, and finally the rate $\rho$ appearing in Theorem~\ref{thm:inner_drift}. For the first model, the rate $\rho$ is already computed in \cite[Prop.~9]{MiRe-09}. The second, third and fourth rates are obtained in \cite[Thm~3.1]{MeMi-14}.

Among these $12$ models, four of them have a support included in a half-plane. These models are considered in the four first rows of Table~\ref{tab:liste}. Notice that the first column of Table~\ref{tab:liste} represents the steps of the random walk; it is implicitly assumed that the transition probabilities are uniform. For example, the first model has jump probabilities $\frac{1}{3}$ in the directions $(-1,1)$, $(1,1)$ and $(1,-1)$.

The remaining $8$ models have simultaneously a drift in the cone and a support which is not included in any half-plane. These models are represented on Table~\ref{tab:liste} as well.

%In the list of $79$ intrinsically different models of walks in the quarter plane established in \cite{BoMi10}, exactly four of them have a support included in a half-plane and a drift inside of the quadrant. These models are considered in the four first rows of Table~\ref{tab:liste}. Notice that the first column of Table~\ref{tab:liste} represents the steps of the random walk; it is implicitly assumed that the transition probabilities are uniform. For example, the first model has jump probabilities $\frac{1}{3}$ in the directions $(-1,1)$, $(1,1)$ and $(1,-1)$.

%In the same list of $79$ quadrant models mentioned above, $8$ models have simultaneously a drift in the cone and a support which is not included in any half-plane. These models are represented on Table~\ref{tab:liste} as well. 

%For each of these $12$ models, we compute $s_{(1,0)}$, $\rho_{(1,0)}$, $s_{(0,1)}$, $\rho_{(0,1)}$, and finally the rate $\rho$ appearing in Theorem~\ref{thm:inner_drift}. For the first model, the rate $\rho$ is already computed in \cite[Prop.~9]{MiRe-09}. The second, third and fourth rates are obtained in \cite[Thm~3.1]{MeMi-14}.

\begin{figure}[ht!]
    \centering
    \includegraphics[scale=0.4]{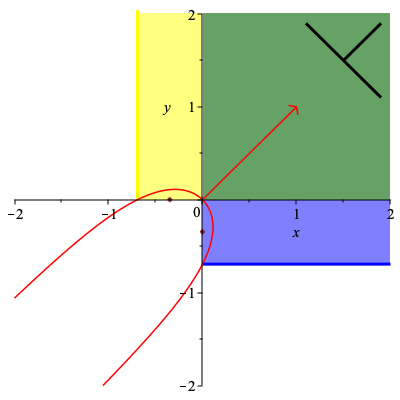}
    \includegraphics[scale=0.4]{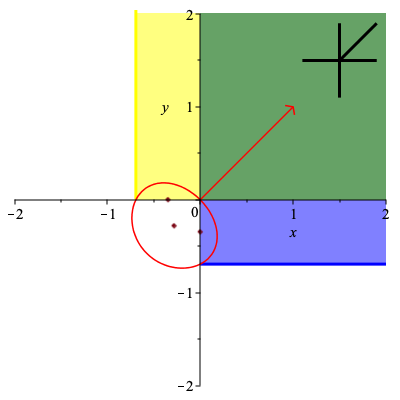}
    \caption{On the left (right), the first example of Table~\ref{tab:liste} (the nineth example of Table~\ref{tab:liste}). We represent the level set $\{t\in \mathbb R^2 \vert L(t) = 1\}$ (red color), the drift (red), the translated cones $t_{(1,0)}+\mathbb R_+^2$ (yellow) and $t_{(0,1)}+\mathbb R_+^2$ (blue); the intersection of the two previous translated quadrants is the positive quarter plane (green). On the left display, the two points stand for the global minima in $t_{(1,0)}+\mathbb R_+^2$ and $t_{(0,1)}+\mathbb R_+^2$. On the right display, the same minima points are drawn, as well as the global minimum on $\mathbb R^2$.}
    \label{fig:examples_domain_1}
\end{figure}
\begin{figure}
    \centering
    \includegraphics[scale=0.4]{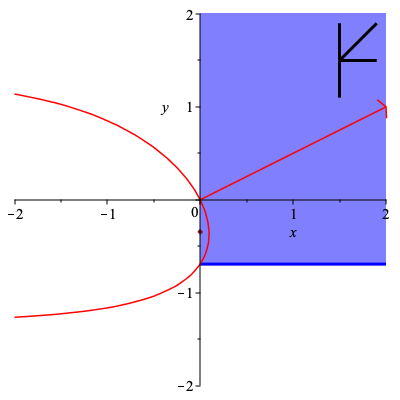}
    \includegraphics[scale=0.4]{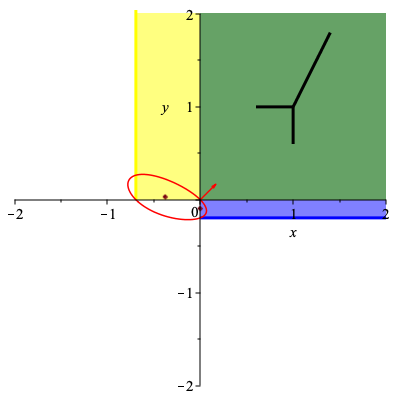}
    \caption{This figure contains two examples, which are complementary to those of Figure~\ref{fig:examples_domain_1}. On the left, a model with step set included in a half-space. The Laplace transform does not admit a minimum in the direction $(1,0)$, so according to Theorem~\ref{thm:inner_drift}, $S'=\{(0,1)\}$ and $\rho$ is computed as the global minimum on $t_{(0,1)}+\mathbb R_+^2$. On the right, the model  with Laplace transform $L(x,y) = \frac{e^{-x}}{6} + \frac{e^{-y}}{2} + \frac{e^{x+2y}}{3}$.}
    \label{fig:examples_domain_2}
\end{figure}

\footnotesize

\begin{table}
\begin{center}
\begin{tabular}{ | m{1.5cm} | m{1.5cm} | m{1.5cm} | m{1.5cm} | m{1.5cm} | m{1.5cm} | } 
 \hline
 Model & $s_{(1,0)}$ & $\rho_{(1,0)}$ & $s_{(0,1)}$ & $\rho_{(0,1)} $ & $\rho$ \\
 \hline 
\begin{tikzpicture}[scale=.35] 
    \draw[->,white] (1,2) -- (0,-2);
    \draw[->,white] (1,-2) -- (0,2);
    \draw[->] (0,0) -- (-1,1);
    \draw[->] (0,0) -- (1,-1);
    \draw[->] (0,0) -- (1,1);
   \end{tikzpicture} & $-\log 2$ & $ \frac{2\sqrt{2}}{3}$ & $-\log 2$ & $\frac{\sqrt{8}}{3}$  & $\frac{\sqrt{8}}{3}$ \cite{MiRe-09}\\ 
  \hline
 \begin{tikzpicture}[scale=.35] 
    \draw[->,white] (1,2) -- (0,-2);
    \draw[->,white] (1,-2) -- (0,2);
    \draw[->] (0,0) -- (-1,1);
    \draw[->] (0,0) -- (1,-1);
    \draw[->] (0,0) -- (1,0);
    \draw[->] (0,0) -- (0,1);
   \end{tikzpicture} & $-\log 2$  & $\frac{1+2\sqrt{2}}{4}$ & $-\log 2$ & $\frac{1+2\sqrt{2}}{4}$ &  $\frac{1+2\sqrt{2}}{4}$ \cite{MeMi-14} \\ 
  \hline
 \begin{tikzpicture}[scale=.35] 
  \draw[->,white] (1,2) -- (0,-2);
    \draw[->,white] (1,-2) -- (0,2);
    \draw[->] (0,0) -- (-1,1);
    \draw[->] (0,0) -- (1,-1);
    \draw[->] (0,0) -- (1,0);
    \draw[->] (0,0) -- (0,1);
    \draw[->] (0,0) -- (1,1);
   \end{tikzpicture} & $-\log3$ &$\frac{1+2\sqrt{3}}{5}$ & $-\log3$ &$\frac{1+2\sqrt{3}}{5}$ & $\frac{1+2\sqrt{3}}{5}$ \cite{MeMi-14}\\ 
 \hline
 \begin{tikzpicture}[scale=.35] 
  \draw[->,white] (1,2) -- (0,-2);
    \draw[->,white] (1,-2) -- (0,2);
    \draw[->] (0,0) -- (-1,1);
    \draw[->] (0,0) -- (1,-1);
    \draw[->] (0,0) -- (0,1);
    \draw[->] (0,0) -- (1,1);
   \end{tikzpicture} & $-\log2$  &$\frac{1+2\sqrt{2}}{4}$ & $-\log3$ &$\frac{\sqrt{3}}{2}$ &  $\frac{1+2\sqrt{2}}{4}$ \cite{MeMi-14}\\ 
 \hline
  \hline 
\begin{tikzpicture}[scale=.35] 
    \draw[->,white] (1,2) -- (0,-2);
    \draw[->,white] (1,-2) -- (0,2);
    \draw[->] (0,0) -- (1,0);
    \draw[->] (0,0) -- (0,1);
    \draw[->] (0,0) -- (-1,-1);
    \draw[->] (0,0) -- (1,1);
   \end{tikzpicture} & $-\log 2$ & $ \frac{\sqrt{8}}{3}$ & $-\log 2$ & $\frac{\sqrt{8}}{3}$  & $\frac{\sqrt{8}}{3}$ \\ 
  \hline
\begin{tikzpicture}[scale=.35] 
    \draw[->,white] (1,2) -- (0,-2);
    \draw[->,white] (1,-2) -- (0,2);
    \draw[->] (0,0) -- (1,1);
    \draw[->] (0,0) -- (1,-1);
    \draw[->] (0,0) -- (-1,0);
    \draw[->] (0,0) -- (0,1);
   \end{tikzpicture} &  $-\log2$ & $\frac{1+2\sqrt{2}}{4}$ & $-\log2$ &$\frac{1+2\sqrt{2}}{4}$ & $\frac{1+2\sqrt{2}}{4}$\\ 
  \hline
   \begin{tikzpicture}[scale=.35] 
    \draw[->,white] (1,2) -- (0,-2);
    \draw[->,white] (1,-2) -- (0,2);
    \draw[->] (0,0) -- (1,1);
    \draw[->] (0,0) -- (-1,1);
    \draw[->] (0,0) -- (0,1);
    \draw[->] (0,0) -- (1,0);
    \draw[->] (0,0) -- (0,-1);
   \end{tikzpicture} & $-\log2$ & $\frac{2+2\sqrt{2}}{5}$ & $-\log 3$ & $\frac{1+2\sqrt{3}}{5}$ & $\frac{2+2\sqrt{2}}{5}$ \\ 
 \hline
 \begin{tikzpicture}[scale=.35] 
    \draw[->,white] (1,2) -- (0,-2);
    \draw[->,white] (1,-2) -- (0,2);
    \draw[->] (0,0) -- (1,1);
    \draw[->] (0,0) -- (-1,1);
    \draw[->] (0,0) -- (0,1);
    \draw[->] (0,0) -- (1,-1);
    \draw[->] (0,0) -- (0,-1);
   \end{tikzpicture} & $-\log 2$ & $\frac{2+2\sqrt{2}}{5}$ & $\log(\frac{2}{3})$& $\frac{2\sqrt{6}}{5}$ & $\frac{2\sqrt{6}}{5}$ \\ 
 \hline
\begin{tikzpicture}[scale=.35] 
    \draw[->,white] (1,2) -- (0,-2);
    \draw[->,white] (1,-2) -- (0,2);
    \draw[->] (0,0) -- (1,1);
    \draw[->] (0,0) -- (-1,0);
    \draw[->] (0,0) -- (0,1);
    \draw[->] (0,0) -- (1,0);
    \draw[->] (0,0) -- (0,-1);
   \end{tikzpicture} & $-\log2$ & $\frac{2+2\sqrt{2}}{5}$ &$-\log2$ & $\frac{2+2\sqrt{2}}{5}$& $\frac{2+2\sqrt{2}}{5}$\\ 
 \hline
   \begin{tikzpicture}[scale=.35] 
    \draw[->,white] (1,2) -- (0,-2);
    \draw[->,white] (1,-2) -- (0,2);
    \draw[->] (0,0) -- (1,1);
    \draw[->] (0,0) -- (-1,1);
    \draw[->] (0,0) -- (1,-1);
    \draw[->] (0,0) -- (1,0);
    \draw[->] (0,0) -- (0,1);
    \draw[->] (0,0) -- (-1,0);
   \end{tikzpicture} & $\log(\frac{2}{3})$ & $\frac{1+2\sqrt{6}}{6}$ & $-\log3$ & $\frac{1+\sqrt{3}}{3}$ & $\frac{1+2\sqrt{6}}{6}$\\ 
 \hline
 \begin{tikzpicture}[scale=.35] 
    \draw[->,white] (1,2) -- (0,-2);
    \draw[->,white] (1,-2) -- (0,2);
    \draw[->] (0,0) -- (1,1);
    \draw[->] (0,0) -- (-1,1);
    \draw[->] (0,0) -- (1,-1);
    \draw[->] (0,0) -- (1,0);
    \draw[->] (0,0) -- (0,1);
    \draw[->] (0,0) -- (-1,-1);
   \end{tikzpicture} & $\log(\frac{2}{3})$  & $\frac{1+2\sqrt{6}}{6}$ & $\log(\frac{2}{3})$ & $\frac{1+2\sqrt{6}}{6}$& $\frac{1+2\sqrt{6}}{6}$\\ 
 \hline
   \begin{tikzpicture}[scale=.35] 
    \draw[->,white] (1,2) -- (0,-2);
    \draw[->,white] (1,-2) -- (0,2);
    \draw[->] (0,0) -- (1,1);
    \draw[->] (0,0) -- (-1,1);
    \draw[->] (0,0) -- (1,-1);
    \draw[->] (0,0) -- (1,0);
    \draw[->] (0,0) -- (0,1);
    \draw[->] (0,0) -- (-1,0);
    \draw[->] (0,0) -- (0,-1);
   \end{tikzpicture} & $\log(\frac{2}{3})$ & $\frac{2+2\sqrt{6}}{7}$ &$\log(\frac{2}{3})$ & $\frac{2+2\sqrt{6}}{7}$ & $\frac{2+2\sqrt{6}}{7}$ \\ 
 \hline
\end{tabular}
\end{center}
\caption{Some important quadrant walk models considered in \cite{BoMi10,MiRe-09,MeMi-14}}
\label{tab:liste}
\end{table}

\normalsize

\subsection{A weighted small step example} 

We now look at a two-dimensional example with Laplace transform
\begin{equation}
\label{eq:Laplace_transform_singular}
    L(x,y) = p_{-1,1}e^{-x+y}+p_{0,1}e^{y}+p_{1,1}e^{x+y}+p_{1,0}e^{x}+p_{1,-1}e^{x-y},
\end{equation}
which is just a weighted version of the third step set on Table~\ref{tab:liste}. We assume that the drift is in the interior of the quadrant, i.e.
\begin{equation}
\label{eq:conditionsurledrift}
p_{-1,1} < p_{1,1}+p_{1,0}+p_{1,-1}\quad \mbox{ and } \quad p_{1,-1} < p_{-1,1}+ p_{0,1} +p_{1,1}
\end{equation}
and that $p_{0,1}+p_{1,1}+p_{1,0}> 0$, so that the walk be truly $2$-dimensional (hypothesis \ref{hyp:A2}). If $p_{-1,1} = p_{1,-1} = 0$, then the walk is trapped in the cone and $\tau=\infty$ almost surely. If both of $p_{-1,1},p_{1,-1}$ are non-zero, then applying Theorem~\ref{thm:inner_drift}, we shall prove that 
\begin{equation}
    \label{eq:value_rho_weighted_singular}
    \rho = \max\bigl\{ p_{0,1}+2\sqrt{p_{-1,1}(p_{1,1}+p_{1,0}+p_{1,-1})},p_{1,0}+2\sqrt{p_{1,-1}(p_{-1,1}+p_{0,1}+p_{1,1})}\bigr\}.
\end{equation}
The formula \eqref{eq:value_rho_weighted_singular} is a generalization of \cite[Thm~3.1]{MeMi-14} to weighted, non-symmetric step sets with Laplace transform \eqref{eq:Laplace_transform_singular}.

If one of $p_{-1,1},p_{1,-1}$ is zero (say $p_{-1,1}$) and the other one is non-zero, then
\begin{equation}
    \label{eq:value_rho_weighted_singular_bis}
    \rho = p_{1,0}+2\sqrt{p_{1,-1}(p_{0,1}+p_{1,1})}.
\end{equation}
\begin{proof}[Proof of \eqref{eq:value_rho_weighted_singular}]
Applying Theorem~\ref{thm:inner_drift} yields
    $\rho=\max\bigl\{ \rho_{(1,0)},\rho_{(0,1)}\bigr\}$, with $\rho_{(1,0)}$ given by \eqref{eq:def_rho_10}, and $\rho_{(0,1)}$ computed symmetrically. In order to derive \eqref{eq:value_rho_weighted_singular}, it is sufficient to show that $\rho_{(1,0)}=p_{0,1}+2\sqrt{p_{-1,1}(p_{1,1}+p_{1,0}+p_{1,-1})}$. 
We first observe that $\rho_{(1,0)}$ is necessarily reached at some boundary point of $t_{(1,0)}+\mathbb{R}^2_+$. Indeed, if $\rho_{(1,0)}$ were reached at an interior point of $t_{(1,0)}+\mathbb{R}^2_+$, then it would be a global minimum of $L$ on $\mathbb R^2$, which does not exist due to the fact that the step set is included in a half-space. Hence it is enough to compute
\begin{equation*}
    \alpha=\min\{ L(s_{(1,0)},y) \vert y \geq 0\}\quad\mbox{ and }\quad \beta=\min\{ L(x,0) \vert x\in [s_{(1,0)}, \infty)\}.
\end{equation*}
The equation $L(x,0)=1$ writes
\begin{equation}
\label{eq:pourtrouverszero}
p_{-1,1}e^{-x}+p_{0,1}+\left(p_{1,1}+p_{1,0}+p_{1,-1}\right) e^{x}=1,
\end{equation}
and is solved by
\begin{equation*}
    x=0\quad \mbox{ and }\quad x=s_{(1,0)}=\ln\left(\frac{p_{-1,1}}{p_{1,1}+p_{1,0}+p_{1,-1}}\right)<0.
\end{equation*}
Now we compute the partial derivative:
\begin{equation*}
\frac{\partial L}{\partial y}(s_{(1,0)},0)  =p_{-1,1}e^{-s_{(1,0)}}+p_{0,1}+p_{1,1}e^{s_{(1,0)}}-p_{1,-1}e^{s_{(1,0)}}
= 1-(p_{0,1}+2p_{1,-1})e^{s_{(1,0)}},
\end{equation*}
where the last equality is obtained thanks to \eqref{eq:pourtrouverszero}.
Because of \eqref{eq:conditionsurledrift}, we have
\begin{equation*}
    p_{0,1}+2p_{1,-1}<p_{0,1}+p_{1,-1} + p_{-1,1}+ p_{0,1} +p_{1,1}=1,
\end{equation*}
and $e^{s_{(1,0)}}<1$. Therefore $\frac{\partial L}{\partial y}(s_{(1,0)},0)>0$ and by convexity the partial function 
$y \mapsto L(s_{(1,0)},y)$ is non-decreasing for $y\geq 0$. By consequence $\alpha=1$.
Accordingly, 
\begin{equation*}
    \rho_{(1,0)} = \beta = \min\{ L(x,0) \vert x\in \mathbb{R}\}.
\end{equation*}
A straightforward computation then shows that the global minimum of a function of the form $ae^{-x}+b+ce^{x}$ is reached at $e^x = \sqrt{\frac{a}{c}}$ and takes the value $b+2\sqrt{ac}$. The proof of~\eqref{eq:value_rho_weighted_singular} is completed.

The formula~\eqref{eq:value_rho_weighted_singular_bis} would be proved similarly, using that $S'=\{(0,1)\}$.
\end{proof}

\subsection{Irrelevance of the location of the drift}

In this paragraph, we would like to illustrate the following fact: the position of the drift (in particular its distance to the boundary)\ does not in general determine which point in $S'$ will give the rate $\rho$.

Let us take three examples in the case of the quarter plane. For the first model in Table~\ref{tab:liste}, the model is symmetric (about the first diagonal), with drift $(\frac{1}{3},\frac{1}{3})$, and one has $\rho = \rho_{(1,0)} = \rho_{(0,1)}$. Consider now the fourth model in Table~\ref{tab:liste}. Its drift is $(\frac{1}{4},\frac{1}{2})$, closer to the vertical axis. The rate is given by $\rho_{(1,0)}$, as shown in Table~\ref{tab:liste}.
Finally, look at the model represented on Figure~\ref{fig:examples_domain_2}, which has a drift of the form $(\frac{1}{6},\frac{1}{6})$. Easy computations show that $\rho_{(1,0)}  \approx 0.97$, which turns out to be the global minimum of $L$ on $\mathbb R^2$, while $\rho_{(0,1)}  \approx 0.99$. By continuity w.r.t.\ the parameters, this last example could be modified to get an example with a drift slightly directed to the vertical axis, but for which the rate $\rho$ would be actually equal to $\rho_{(0,1)}$.

\subsection{Normal distribution}
\label{subsec:normal_dist}
Here we consider the case where the step distribution $\mu$ is a standard normal distribution on $\mathbb{R}^d$ with mean $m$. The Laplace transform is then given by
\begin{equation}
\label{eq:laplace_gaussian}
   L(t)=\exp\left(\sclr{t}{m}+\frac{\Vert t\Vert^2}{2}\right)=\exp\left(\frac{\Vert t+m\Vert^2-\Vert m\Vert^2}{2}\right).
\end{equation}
Let us first recall an explicit expression for the minimum of $L$ on the closed convex cone $K^*$. It is clearly reached when $t$ is the projection of $-m$ on $K^*$, and then
\begin{equation*}
   \Vert t+m\Vert=d(-m,K^*)=d(m, K^{\sharp}),
\end{equation*}
where $K^{\sharp}=-K^*$ is the \textit{polar cone} of $K$. By Moreau's decomposition theorem
\begin{equation*}
   d(m,K^{\sharp})^2+d(m, K)^2=\Vert m \Vert^2,
\end{equation*}
therefore the minimum of $L$ on $K^*$ is $\exp\bigl(-\frac{1}{2}d(m,K)^2\bigr)$.

Now consider the set $\Gamma=\{t\in\mathbb{R}^2 \vert L(t)=1\}$. From \eqref{eq:laplace_gaussian}, we see that $\Gamma$ is the circle $\mathcal{C}(-m,\Vert m\Vert)$ with center at $-m$ and radius $\Vert m\Vert$, see Figure~\ref{fig:examples_domain_3}.
For $t\in\Gamma$, we have
\begin{equation*}
L(t+z)=\exp\left(\frac{\Vert z+ t+m\Vert^2-\Vert t+ m\Vert^2}{2}\right)
\end{equation*}
since $\Vert t+m\Vert^2=\Vert m\Vert^2$.
The function $z\mapsto L(t+z)$ is thus the Laplace transform of a standard normal distribution with mean $t+m$, and it follows that
\begin{equation*}
   \min_{z\in K^*}L(t+z)=\exp\left(-\frac{1}{2}d(t+m,K)^2\right).
\end{equation*}
For a closed pyramid $K=\cap_{u\in S}D_u$ containing $m$ in its interior, the set $S'$ is equal to $S$ (see Remark \ref{rem:autre_carac_de_S'}) and the rate $\rho$ is given by
\begin{equation*}
\rho = \max_{u\in S}\exp\left(-\frac{1}{2}d(t_u+m,K)^2\right)
    = \exp\left(-\frac{1}{2}\min_{u\in S}d(z_u, K)^2\right),
\end{equation*}
where $z_u=t_u+m$ is characterized as being the unique intersection point of the circle $\mathcal{C}(0,\Vert m\Vert)$ and the half-line $\{su+m \vert s<0\}$.

For example, consider the cone $K=\{\rho (\cos \theta, \sin \theta) \vert \rho \geq 0, \theta \in [0, \alpha]\}$ in the plane and $m=r(\cos \beta, \sin\beta)$, with $0<\beta<\alpha\leq \pi$.
Then $K=D_u\cap D_v$ with $u=(0,1)$ and $v=(\sin\alpha,-\cos\alpha)$.
Some computations show that $z_u=r(\cos\beta, -\sin\beta)$ and 
$z_v=r(\cos(2\alpha-\beta), \sin(2\alpha-\beta))$. From this, we obtain $\rho=e^{-d^2/2}$ with 
\begin{equation*}
   d= r \min\big\{\sin \beta, \sin(\alpha-\beta)\bigr\}.
\end{equation*}
Note that in this example, $d$ equals the distance between $m$ and $\partial K$.

\begin{figure}
    \centering
    \includegraphics[scale=0.4]{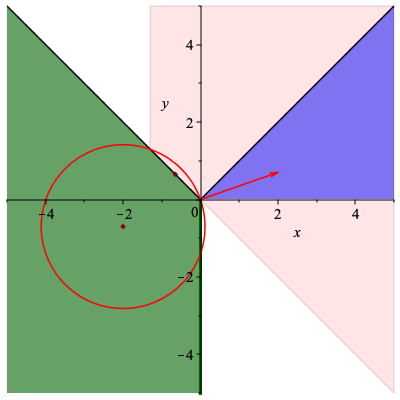}
    \caption{This figure summarizes the results presented in Subsection~\ref{subsec:normal_dist}. The equation $L(t)=1$ describes a circle with center at $c=-m$ and radius $r=\Vert m\Vert$. The cone $K$ is represented in blue and the polar cone in green. In pink, we have represented one of the two domains $t_u+K^*$.}
    \label{fig:examples_domain_3}
\end{figure}

\section{Proof of Theorem \ref{thm:inner_drift}}

\subsection{Sketch of the proof}

Let $K=\cap_{u\in S} D_{u}$ be a closed pyramid. We want to estimate
\begin{equation*}
   \Delta_n  =\PP^x(\tau>n)-\PP^x(\tau=\infty)= \PP^x(n<\tau <\infty).
\end{equation*}
By the geometry of $K$, we have $\{\tau<\infty\}=\cup_{u\in S}\{\sigma_{u}<\infty\}$, where $\sigma_u$ denotes the first exit time from the half-space $D_u$, therefore
\begin{equation*}
   \max_{u\in S}\PP^x(n<\tau, \sigma_{u}<\infty)\leq  \Delta_n \leq \sum_{u\in S}\PP^x(n<\tau, \sigma_{u}<\infty).
\end{equation*}
If $u$ is such that $\PP(X_1 \in D_u)=1$, then the random walk can not leave the half-space $D_u$ and 
$\PP^x(n<\tau, \sigma_{u}<\infty)=0$. So we can rewrite the preceding relation as
\begin{equation*}
\label{eq:encadrementsimple}
\max_{u\in S'}\PP^x(n<\tau, \sigma_{u}<\infty)\leq  \Delta_n \leq \sum_{u\in S'}\PP^x(n<\tau, \sigma_{u}<\infty),
\end{equation*}
where $S'$ is the subset of all $u\in S$ satisfying $\PP(X_1\in D_u)<1$.
We shall see that those simple bounds are sufficient to obtain our estimate on $\Delta_n$. 
Estimates of each term $\PP^x(n<\tau, \sigma_{u}<\infty)$ are obtained in Lemma~\ref{lem:stay-cone-escape-halspace}. Theorem~\ref{thm:inner_drift} then follows immediately. Lemmas~\ref{lem:new_drift_on_gamma} and~\ref{lem:cramer_with_stopping_times} are preparatory material.

\subsection{Turning the drit inside out}
\label{subsec:basicsLaplace}
The Laplace transform of a vector $X=(X^{(1)}, \ldots, X^{(d)})\in\RR^d$ with probability distribution $\mu$ is the function $L$ defined for $t\in\RR^d$ by
\begin{equation*}
     L(t)= \EE \bigl( e^{\sclr{t}{X}}\bigr)=\int_{\RR^d} e^{\sclr{t}{y}} \mu({d}y).
\end{equation*}
It is finite in some neighborhood of the origin if and only if $\EE \bigl( e^{\alpha \Vert X\Vert}\bigr) $ is finite for some $\alpha>0$.
If $L$ is finite in a neighborhood of the origin, say $\overline{B(0,r)}$, then $L$ is infinitely differentiable in $B(0,r)$ and its partial derivatives are given there by
\begin{equation*}
     \frac{\partial L(t)}{\partial t_i}=\EE \bigl( X^{(i)}e^{\sclr{t}{X}}\bigr).
\end{equation*}
Therefore, the expectation $\EE X=(\EE X^{(1)}, \ldots, \EE X^{(d)})$ is the gradient of $L$ at the origin $\nabla L(0)$. Notice that $X$ is centered (i.e.~$\EE X=0$) if and only if $0$ is a critical point of $L$. Since $L$ is a convex function, this means that $0$ is a minimum point of $L$ in $\overline{B(0,r)}$.

Now suppose that $L$ is finite in some ball $\overline{B(t_0,r)}$ and define a new probability measure $\mu_*$ by
\begin{equation}
\label{eq:def/mu*}
   \mu_{*}({d}y)=\frac{e^{\sclr{t_0}{y}}}{L(t_0)}\mu({d}y).
\end{equation}
We will say that $\mu_{*}$ is the $t_0$-changed measure.
The Laplace transform $L_*$ of $\mu_*$ is linked to that of $\mu$ by the relation $L_*(t)=L(t_0+t)/L(t_0)$,
and therefore $L_*$ is finite in some neighborhood of the origin. As a consequence, applying the results above shows that any random vector $X_*$ with distribution $\mu_*$ satisfies:
\begin{itemize} 
\item $\EE \left( e^{\alpha \Vert X_*\Vert}\right)< \infty$ for some $\alpha>0$;
\item $\EE X_*= \nabla L(t_0) / L(t_0)$.
\end{itemize}
For specific points $t_0$ satisfying the equation $L(t_0)=1$, we obtain additional information on the new drift:
\begin{lem}
\label{lem:new_drift_on_gamma}
Assume hypotheses \ref{hyp:A1}, \ref{hyp:A2}, \ref{hyp:A4}, \ref{hyp:A5} above, $K=\cap_{u\in S} D_{u}$, and $m\in K^o$. Then the subset $S'\subset S$ of directions $u$ such that the equation $L(su)=1$ has a solution $s=s_u<0$ is non-empty and equal to the set of all $u\in S$ such that $\PP(X_1\in D_u)<1$.
In addition, for any $u\in S'$, the gradient of $L$ at the point $t_u=s_u u$ satisfies $\sclr{\nabla L(t_u)}{u}<0$.  
\end{lem}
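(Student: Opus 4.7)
The plan is to fix $u\in S$ and reduce everything to the one-variable function $f_u(s):=L(su)=\EE(e^{s\langle u,X_1\rangle})$, which is the moment generating function of the scalar random variable $Y_u:=\langle u,X_1\rangle$. By \ref{hyp:A4}, $f_u$ is $C^\infty$ and convex on all of $\RR$, with $f_u(0)=1$. Because $K\subset D_u$ is equivalent to $u\in K^*$, and because $m\in K^o$, a standard interior-point argument (if $B(m,\epsilon)\subset K$ then $m-\tfrac{\epsilon}{2}u\in K\subset D_u$, yielding $\langle u,m\rangle\geq \tfrac{\epsilon}{2}>0$) gives $f_u'(0)=\langle u,m\rangle>0$ for every $u\in S$. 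This positive derivative at $0$ is the engine of the whole proof.

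To identify $S'$ with $\{u\in S:\PP(X_1\in D_u)<1\}$, I would split into two cases. If $\PP(X_1\in D_u)=1$, then $Y_u\geq 0$ almost surely; since $\EE Y_u=\langle u,m\rangle>0$ forces $\PP(Y_u>0)>0$, one obtains $f_u(s)<1$ for every $s<0$, ruling out any negative solution of $L(su)=1$. If instead $\PP(X_1\in D_u)<1$, then $\PP(Y_u<0)>0$, and since $e^{sY_u}\to+\infty$ pointwise on $\{Y_u<0\}$ as $s\to-\infty$, Fatou's lemma yields $f_u(s)\to+\infty$. Combined with $f_u(0)=1$ and $f_u'(0)>0$ (which forces $f_u$ strictly below $1$ just to the left of $0$), the intermediate value theorem produces a crossing point $s_u<0$ with $f_u(s_u)=1$; uniqueness on $(-\infty,0)$ follows from convexity and $f_u'(0)>0$. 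Non-emptiness of $S'$ is then immediate: were $\PP(X_1\in D_u)=1$ for every $u\in S$, intersecting these finitely many almost sure events would give $\PP(X_1\in K)=1$, contradicting \ref{hyp:A5}.

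For the gradient inequality, the chain rule gives $\langle\nabla L(t_u),u\rangle=f_u'(s_u)$, so it suffices to prove $f_u'(s_u)<0$. By convexity of $f_u$ and the equality $f_u(s_u)=f_u(0)=1$, the function $f_u$ lies below $1$ on $[s_u,0]$; moreover the condition $f_u'(0)>0$ forces $f_u$ to be strictly below $1$ on some left neighborhood of $0$. If one had $f_u'(s_u)\geq 0$, then by monotonicity of $f_u'$ the function $f_u$ would be non-decreasing on $[s_u,\infty)$, hence bounded below by $f_u(s_u)=1$ throughout $[s_u,0]$, a direct contradiction. Therefore $f_u'(s_u)<0$.

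The only mildly delicate step is guaranteeing the blow-up $f_u(s)\to+\infty$ as $s\to-\infty$ when $\PP(X_1\in D_u)<1$; this is what actually produces the solution $s_u$, and uses \ref{hyp:A4} only very mildly (pointwise finiteness of $f_u$ suffices for the Fatou step). Everything else is elementary one-dimensional convex analysis once the right slice $f_u$ has been isolated, so the proof avoids any genuinely multidimensional complication.
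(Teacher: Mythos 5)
Your proof is correct and follows essentially the same route as the paper's: reduce to the one-variable convex function $\phi(s)=L(su)$, establish $\phi'(0)=\langle m,u\rangle>0$ from $m\in K^o$ and $u\in K^*$, and split on the dichotomy between $\phi(s)\to\infty$ as $s\to-\infty$ (when $\PP(X_1\in D_u)<1$) and $\phi(s)<1$ for all $s<0$ (when $\PP(X_1\in D_u)=1$), with nonemptiness of $S'$ and the sign of $\phi'(s_u)$ following as you describe. The only real difference is self-containment: where the paper cites a convex-analysis exercise for the drift inequality and invokes \cite[Lem.~6]{GaRa16} for the boundary behavior of $\phi$, you re-derive both from scratch (an explicit interior-ball computation and a Fatou estimate), and you replace the paper's appeal to strict convexity (hypothesis \ref{hyp:A2}) for uniqueness of $s_u$ with the softer combination of convexity, differentiability and $\phi'(0)>0$ -- but these are variations in justifying the same supporting facts, not a different method.
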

\begin{rem}
In other words, under the $t_u$-changed measure,  the new drift  does not belong to the half-space
$D_u=\{z\in\RR^d \vert \sclr{z}{u}\geq 0\}$.
\end{rem}

\begin{proof}
First, we note that $\sclr{m}{u}>0$ for all non-zero  $u\in K^*$. Indeed, if $C$ is a closed cone, the interior of its dual cone has the following description:
\begin{equation*}
   (C^*)^o=\bigl\{ y \in \mathbb{R}^d \vert \sclr{y}{u}>0 \mbox{ for all }u\in C\setminus\{0\} \bigr\},
\end{equation*}
see Exercise 2.31(d) in \cite{BoVa04} for example. 
Since $K$ is a closed convex cone, it is well known that $(K^*)^*=K$ (see Consequence 1 in \cite{Stu93} or Theorem 14.1 in \cite{Roc70}).
Applying this to $C=K^*$, we see that the interior of $K$ can be expressed as
\begin{equation*}
   K^o=\bigl\{ y \in \RR^d \vert \sclr{y}{u} > 0 \mbox{ for all }u\in K^*\setminus\{0\} \bigr\}.
\end{equation*}
This proves the first assertion. 

Let $u\in S$. By definition one has $K\subset D_u$, thus $u\in K^*$. Consider the partial function of a real variable $\phi(s)=L(su)$. This function is $C^{\infty}$ and strictly convex, since the Laplace transform is $C^{\infty}$ thanks to \ref{hyp:A2} and strictly convex thanks to \ref{hyp:A4}.
Its derivative is given by $\phi'(s)=\sclr{\nabla L(su)}{u}$, hence $\phi'(0)=\sclr{m}{u}>0$.
Based on \cite[Lem.~6]{GaRa16}, the following dichotomy holds:
\begin{itemize}
   \item If $\PP(X_1\in D_u)<1$, then $\lim_{s\to -\infty}\phi(s)=\infty$. Since $\phi$ is strictly convex and satisfies $\phi(0)=1$ and $\phi'(0)>0$, there exists a unique $s<0$ such that 
$\phi(s)=1$. Moreover, at this point, the derivative $\phi'(s)$ must be negative. Hence $L(su)=1$ and $\sclr{\nabla L(su)}{u}<0$. 
   \item If $\PP(X_1\in D_u)=1$, then $\lim_{s\to -\infty}\phi(s)=\PP(\sclr{X_1}{u}=0)<1$. In this case, the equation $\phi(s)=1$ has no solution $s<0$.
\end{itemize}
Since $\PP(X_1\in K)<1$ by \ref{hyp:A5} and $K=\cap_{u\in S}D_u$ with $S$ finite, there is at least one $u\in S$ such that $\PP(X_1\in D_u)<1$. Therefore $S'$ is non-empty.
\end{proof}

\subsection{Change of measure}

Let $t_0$ be given and consider the $t_0$-changed measure \eqref{eq:def/mu*}.
We shall denote by $\PP_*^x$ a probability distribution under which $(S_n)_{n\geq0}$ is a random walk with increment distribution $\mu_{*}$ and started at $S_0=x$. It is easily checked that
\begin{equation}
\label{eq:girsanov}
\EE^x\bigl(f(S_1,S_2,\ldots, S_k)\bigr)  =  L(t_0)^k e^{\sclr{t_0}{x}} \EE_{*}^x\left(f(S_1,S_2,\ldots, S_k)e^{-\sclr{t_0}{S_k}}\right),
\end{equation}
for any non-negative measurable function $f:\RR^k\to[0,\infty)$.

\begin{lem}
\label{lem:cramer_with_stopping_times}
Assume $L(t_0)=1$, and let $\tau$ and $\sigma$ be two stopping times w.r.t.~the natural filtration associated with $(S_n)_{n\geq 0}$. Then
\begin{equation}
\label{eq:cramer_with_stopping_times}
\PP^x(n<\tau\leq\sigma<\infty)=e^{\sclr{t_0}{x}} \EE_{*}^x\left(n<\tau\leq \sigma <\infty, e^{-\sclr{t_0}{S_{\sigma}}}\right).
\end{equation}
\end{lem}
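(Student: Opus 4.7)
The plan is to reduce the stopping-time identity to the fixed-time identity \eqref{eq:girsanov}, which under the hypothesis $L(t_0)=1$ takes the simplified form
\begin{equation*}
\EE^x\bigl(f(S_1,\dots,S_k)\bigr) = e^{\sclr{t_0}{x}}\,\EE_{*}^x\bigl(f(S_1,\dots,S_k)\,e^{-\sclr{t_0}{S_k}}\bigr)
\end{equation*}
for every non-negative measurable $f$. The key observation is that the event $\{n<\tau\leq\sigma<\infty\}$ can be partitioned on the value of $\sigma$, and on each piece $\{\sigma=k\}$ we have $S_\sigma=S_k$ while the whole indicator becomes $\mathcal F_k$-measurable.

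First I would write the countable decomposition
\begin{equation*}
\{n<\tau\leq\sigma<\infty\} = \bigsqcup_{k\geq n+1}\{n<\tau\leq k,\;\sigma=k\},
\end{equation*}
noting that the constraint $\tau\leq\sigma$ becomes $\tau\leq k$ on $\{\sigma=k\}$. Since $\tau$ and $\sigma$ are stopping times w.r.t.\ the natural filtration $\mathcal F_k=\sigma(S_1,\dots,S_k)$, the event $A_k:=\{n<\tau\leq k,\;\sigma=k\}$ is $\mathcal F_k$-measurable, hence its indicator can be written as a (bounded, non-negative) function $f_k(S_1,\dots,S_k)$.

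Then I would apply the Girsanov identity with $f=f_k$ to get, for each $k\geq n+1$,
\begin{equation*}
\PP^x(A_k)=\EE^x\bigl(f_k(S_1,\dots,S_k)\bigr)=e^{\sclr{t_0}{x}}\,\EE_{*}^x\bigl(\mathbf{1}_{A_k}\,e^{-\sclr{t_0}{S_k}}\bigr)=e^{\sclr{t_0}{x}}\,\EE_{*}^x\bigl(\mathbf{1}_{A_k}\,e^{-\sclr{t_0}{S_\sigma}}\bigr),
\end{equation*}
where in the last equality we used $S_\sigma=S_k$ on $A_k\subset\{\sigma=k\}$. Summing over $k\geq n+1$ and invoking countable additivity under $\PP^x$ on the left and monotone convergence under $\PP_*^x$ on the right yields \eqref{eq:cramer_with_stopping_times}.

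There is no real obstacle here: the proof is essentially bookkeeping around the standard stopping-time extension of a change-of-measure identity. The only point requiring a pinch of care is the measurability argument that lets us replace $\{n<\tau\leq\sigma\}\cap\{\sigma=k\}$ by an $\mathcal F_k$-event, together with the harmless replacement $S_\sigma=S_k$ on that event; everything else is a direct application of \eqref{eq:girsanov} and countable additivity.
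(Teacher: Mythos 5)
Your proof is correct and follows essentially the same route as the paper: partition $\{n<\tau\leq\sigma<\infty\}$ on the value of $\sigma$, apply the fixed-time change-of-measure identity \eqref{eq:girsanov} on each piece (using $L(t_0)=1$ and $S_\sigma=S_k$ there), and sum over $k$. You spell out the $\mathcal F_k$-measurability of each piece a bit more explicitly, but this is the same argument as in the paper.
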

\begin{proof}
For all $k>n$, applying \eqref{eq:girsanov} gives
\begin{align*}
\PP^x(n<\tau\leq\sigma=k) & =e^{\sclr{t_0}{x}} \EE_{*}^x\left(n<\tau\leq \sigma=k, e^{-\sclr{t_0}{S_{k}}}\right)\\
&= e^{\sclr{t_0}{x}} \EE_{*}^x\left(n<\tau\leq \sigma=k, e^{-\sclr{t_0}{S_{\sigma}}}\right).
\end{align*}
By summing over $k>n$, we obtain~\eqref{eq:cramer_with_stopping_times}.
\end{proof}

By specializing this relation to the exit time $\tau$ from the cone $K=\cap_{u\in S}D_u$ and the exit time $\sigma_u$ from one half-space $D_u$, we shall obtain the following: 
\begin{lem}
\label{lem:stay-cone-escape-halspace}
Assume hypotheses \ref{hyp:A1}--\ref{hyp:A5}, $K=\cap_{u\in S}D_u$, and $m\in K^o$. For all $u\in S'$,
\begin{equation*}
   \PP^x(n<\tau,  \sigma_u<\infty)=\rho^n_u B_n,
\end{equation*}
where  $B_n$ satisfies $\sqrt[n]{B_n}\to 1$ and $B_n\to 0$,
and
\begin{equation*}
   \rho_u=\min_{z\in K^*}L(t_u+z)\in(0,1),
\end{equation*}
where $t_u=su$ is the unique solution to $L(su)=1$  with $s<0$.
\end{lem}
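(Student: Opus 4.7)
The strategy is to apply Lemma~\ref{lem:cramer_with_stopping_times} with $t_0=t_u$ (valid since $L(t_u)=1$) to the stopping times $\tau$ and $\sigma_u$, using that $\tau\leq\sigma_u$ always (because $K\subset D_u$). This yields
\begin{equation*}
\PP^x(n<\tau,\sigma_u<\infty)=\PP^x(n<\tau\leq\sigma_u<\infty)=e^{\sclr{t_u}{x}}\EE_{*}^x\bigl[\mathbb{1}\{n<\tau\leq\sigma_u<\infty\}e^{-\sclr{t_u}{S_{\sigma_u}}}\bigr].
\end{equation*}
Under the $t_u$-changed measure, the new drift is $m_{*}=\nabla L(t_u)$, and Lemma~\ref{lem:new_drift_on_gamma} gives $\sclr{m_{*}}{u}<0$. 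Hence the one-dimensional projection $(\sclr{u}{S_k})$ has negative drift under $\PP_{*}$, forcing $\sigma_u<\infty$ $\PP_{*}^x$-almost surely; moreover $m_{*}\notin D_u\supset K$. The $t_u$-changed walk then satisfies the hypotheses of Theorem~\ref{thm:exp_rate_and_zero_term} (the non-trivial point, coercivity of $L_{*}(\cdot)=L(t_u+\cdot\,)$ on $K^{*}$, follows from $m\in K^o$, which forces $\PP(\sclr{w}{X_1}>0)>0$ for every $w\in K^{*}\setminus\{0\}$), giving
\begin{equation*}
\PP_{*}^x(\tau>n)=\rho_u^n B_n^{(1)},\quad \rho_u=\min_{z\in K^{*}}L(t_u+z)\in(0,1),
\end{equation*}
with $\sqrt[n]{B_n^{(1)}}\to 1$ and $B_n^{(1)}\to 0$ (and $\rho_u<1$ because $m_{*}\notin\partial K$). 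The \emph{upper bound} is then immediate: on $\{\sigma_u<\infty\}$, $\sclr{u}{S_{\sigma_u}}<0$ and $s_u<0$ give $\sclr{t_u}{S_{\sigma_u}}>0$ and $e^{-\sclr{t_u}{S_{\sigma_u}}}\in(0,1]$, whence $\PP^x(n<\tau,\sigma_u<\infty)\leq e^{\sclr{t_u}{x}}\PP_{*}^x(\tau>n)=e^{\sclr{t_u}{x}}\rho_u^n B_n^{(1)}$.

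For the matching \emph{lower bound}, I would apply the strong Markov property at time $n$ (using $\sigma_u<\infty$ $\PP_{*}$-a.s.\ to drop that indicator) to write
\begin{equation*}
\EE_{*}^x\bigl[\mathbb{1}\{n<\tau\}e^{-\sclr{t_u}{S_{\sigma_u}}}\bigr]=\EE_{*}^x\bigl[\mathbb{1}\{n<\tau\}h(S_n)\bigr],\quad h(y)=\EE_{*}^y\bigl[e^{-\sclr{t_u}{S_{\sigma_u}}}\bigr].
\end{equation*}
The inverse change of measure (from $\PP_{*}$ back to $\PP$) then identifies $h(y)=e^{-\sclr{t_u}{y}}\PP^y(\sigma_u<\infty)$. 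For $y\in K$, setting $a=\sclr{u}{y}\geq 0$, one has $h(y)=e^{-s_u a}g(a)$, where $g(a)$ is the probability that the one-dimensional walk $(\sclr{u}{S_k})$---which has positive drift $\sclr{u}{m}>0$ under $\PP$ and satisfies $\EE[e^{s_u\sclr{u}{X_1}}]=L(s_u u)=1$---ever drops strictly below zero starting from $a$. The classical Cram\'er--Lundberg two-sided estimate $c_1 e^{s_u a}\leq g(a)\leq e^{s_u a}$, uniform in $a\geq 0$, gives $h(y)\geq c_1>0$ uniformly on $K$, whence
\begin{equation*}
\PP^x(n<\tau,\sigma_u<\infty)\geq c_1 e^{\sclr{t_u}{x}}\rho_u^n B_n^{(1)},
\end{equation*}
completing the two-sided bound with $B_n:=\PP^x(n<\tau,\sigma_u<\infty)/\rho_u^n$ satisfying $\sqrt[n]{B_n}\to 1$ and $B_n\to 0$.

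The main obstacle is the Cram\'er--Lundberg lower bound $g(a)\geq c_1 e^{s_u a}$ uniform in $a\geq 0$; the matching upper bound is immediate from optional stopping applied to the exponential martingale $e^{s_u\sclr{u}{S_k}}$ (using $L(s_u u)=1$), while the lower bound relies on hypothesis \ref{hyp:A4} (exponential moments, to control the overshoot past zero of the one-dimensional walk) together with the non-degeneracy of $\sclr{u}{X_1}$ coming from \ref{hyp:A2} and from $u\in S'$.
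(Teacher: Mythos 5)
Your proof follows the same overall skeleton as the paper's: change of measure via Lemma~\ref{lem:cramer_with_stopping_times}, strong Markov at time $n$ to factor out a function of $S_n$, a uniform two-sided bound on that function, and then Theorem~\ref{thm:exp_rate_and_zero_term} applied to the tilted walk (for which your coercivity check, $\PP(\sclr{w}{X_1}>0)>0$ for all $w\in K^*\setminus\{0\}$ because $m\in K^o$, is a correct and slightly more direct version of the paper's argument via \cite[Lem.~6]{GaRa16}). Your upper bound, obtained by observing $e^{-\sclr{t_u}{S_{\sigma_u}}}\leq 1$ on $\{\sigma_u<\infty\}$, is fine; the inverse change of measure giving $h(y)=e^{-\sclr{t_u}{y}}\PP^y(\sigma_u<\infty)$ is also correct. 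The genuine difference is purely one of framing: the paper stays under $\PP_*$ and reads $\phi(y)=\EE_*^y[e^{s_uZ_T}]$ as an expectation of a bounded, decreasing function of the overshoot of the $\PP_*$-walk (Lemma~\ref{lem:overshoot}), whereas you transport back to $\PP$ and read the same quantity as a tilted ruin probability $g(a)$ of the original walk. These are the same number seen through the same exponential change of measure, so there is no new idea either way.

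The gap is precisely at the step you flag as ``the main obstacle''. You invoke a ``classical Cram\'er--Lundberg two-sided estimate $c_1e^{s_ua}\leq g(a)\leq e^{s_ua}$ uniform in $a\geq 0$''. The textbook Cram\'er estimate is an \emph{asymptotic} $g(a)\sim C e^{s_u a}$ as $a\to\infty$, and it is typically stated under a non-arithmetic hypothesis on the ladder-height distribution. To turn it into the uniform lower bound you need, one must (i) prove positivity of the limiting constant, (ii) handle the arithmetic (lattice) case separately, since the asymptotic then only holds along the lattice and the renewal-theoretic limit is different, and (iii) patch in a separate elementary lower bound for $a$ in a bounded range, so that the bound is uniform over \emph{all} $a\geq 0$ and not just for $a$ large. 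None of this is automatic from a black-box citation, and in fact it is exactly the content of the paper's Lemma~\ref{lem:overshoot}, which is proved from scratch (splitting into bounded and unbounded starting points, and treating the arithmetic and non-arithmetic ladder-height cases separately via Feller's renewal theory). So your proof is structurally correct and correctly locates the difficulty, but the key estimate still needs a proof in the generality required (in particular in the arithmetic case); one cannot simply appeal to ``classical Cram\'er--Lundberg'' in the form you state.
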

\begin{proof}
Fix $u \in S'$ and let $t_u$ be as in the statement of Lemma~\ref{lem:new_drift_on_gamma}.
Then the drift $m_{*}=\nabla L(t_u)$ of the random walk under $\PP_{*}^x$ satisfies $\sclr{u}{m_*}<0$, so that $\sigma_u$ is almost surely finite and the relation \eqref{eq:cramer_with_stopping_times} of Lemma~\ref{lem:cramer_with_stopping_times} gives
\begin{align*}
\PP^x(n<\tau, \sigma_u<\infty) &=e^{\sclr{t_u}{x}} \EE_{*}^x\left(n<\tau, e^{-\sclr{t_u}{S_{\sigma_u}}}\right)\\
    &=e^{\sclr{t_u}{x}} \EE_{*}^x\left(n<\tau, \EE_{*}^{S_n}\left(e^{-\sclr{t_u}{S_{\sigma_u}}}\right) \right)\\
    &=e^{\sclr{t_u}{x}} \EE_{*}^x\bigl(n<\tau, \phi(S_n) \bigr),
\end{align*}
where we have set 
\begin{equation*}
   \phi(y)=\EE_{*}^{y}\left(e^{-\sclr{t_u}{S_{\sigma_u}}}\right)=\EE_{*}^{y}\left(e^{s_u\sclr{-u}{S_{\sigma_u}}}\right).
\end{equation*}
Let us focus on $\phi(y)$ for $y\in K$.
Under $\PP^y_{*}$, the projected random walk $Z_n=\sclr{-u}{S_n}$ is started at $\widetilde{y}=\sclr{-u}{y}\leq 0$ and  has a positive drift $\sclr{-u}{m_*}>0$. The random time $\sigma_u$ corresponds to its first exit time $T$ from the negative half-line $(-\infty, 0]$ and therefore $\phi(y)$ is the expectation of $f(Z_T)$ when the random walk is started at $\widetilde{y}\leq 0$, where $f(t)=e^{s_ut}$ is continuous and non-increasing.
So, it follows from Lemma~\ref{lem:overshoot} below that the function $\phi(y)$ is bounded from above and below on the cone $K$ by two positive constants $0<c<C$.
Therefore
\begin{equation*}
ce^{\sclr{t_u}{x}}\, \PP_{*}^x\left(\tau>n\right) \leq  \PP^x(n<\tau  \leq \sigma_u<\infty) \leq Ce^{\sclr{t_u}{x}}\,\PP_{*}^x\left(\tau>n\right).
\end{equation*}

As a last step, we apply Theorem~\ref{thm:exp_rate_and_zero_term} to estimate $\PP_{*}^x\left(\tau>n\right)$. Let us see why the hypotheses of this theorem are satisfied under $\PP_{*}$, i.e.~by our new random walk with increment distribution $\mu_{*}({d}y)=e^{\sclr{t_u}{y}}\mu({d}y)$ and Laplace transform $L_{*}(t)=L(t_u+t)$:
\begin{itemize}
   \item[\ref{hyp:A1}] The cone hasn't changed.
   \item[\ref{hyp:A2}] The random walk is truly $d$-dimensional, since this condition depends only on the support of  $\mu_{*}$ (it should not be included in any linear hyperplane)\ and its support is exactly the same as that of $\mu$.
   \item[\ref{hyp:A3}] For the same reason, the new random walk inherits from the original random walk the property that it can reach the interior of the cone. This can be seen via~\eqref{eq:girsanov}.
   \item[\ref{hyp:A4}] The Laplace transform  $L_{*}(t)=L(t_u+t)$ is finite everywhere, since $L$ is finite everywhere.
   \item The new drift $m_{*}$ is not in $K$, since $\sclr{m_{*}}{u}<0$ and $K\subset D_u$.
   \item Finally, it remains to check that $L_{*}$ is coercive on the dual cone $K^*$. Fix $v\in K^*$ and recall from \cite[Lem.~6]{GaRa16} that $\lim_{t\to \infty}L_{*}(tv)=\infty$ if and only if the support of $\mu_{*}$ is not included in $-D_v$. But $\mu$ and $\mu_{*}$ have the same support which is not included in $-D_v$, for else the original drift $m$ would also be in $-D_v$. This is impossible since $m\in K^o$ and $-D_v\cap K^o=\emptyset$.
\end{itemize}
It follows from Theorem~\ref{thm:exp_rate_and_zero_term} that
\begin{equation*}
   \PP_{*}^x\left(\tau>n\right)=\rho^n_u B_n,
\end{equation*}
where  $B_n$ satisfies $\sqrt[n]{B_n}\to 1$ and $B_n\to 0$,
and
\begin{equation*}
   \rho_u=\min_{z\in K^*}L(t_u+z)\in(0,1).
\end{equation*}
($\rho_u<1$ since $m_{*}\not\in K$, see Theorem~\ref{thm:exp_rate_and_zero_term}.)
This concludes the proof of the lemma.
\end{proof}

We end this section with a result on overshoots of a random walk that is uniform w.r.t.~the starting point. 

\begin{lem}[Overshoot]
\label{lem:overshoot}
Let $(Z_n)_{n\geq 0}$ be a one-dimensional random walk with integrable i.i.d.\ increments $Y_k$ satisfying $\EE Y_k >0$. Let $T$ denote the first exit time from the half-line $(-\infty, 0]$. For any continuous, non-increasing function $f:[0,\infty)\to (0,\infty)$, we have
\begin{equation*}
   0<\inf_{x\leq 0} \EE^xf(Z_{T})\leq \sup_{x\leq 0} \EE^xf(Z_{T})<\infty.
\end{equation*}
\end{lem}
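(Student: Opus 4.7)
The upper bound is immediate: since $f$ is non-increasing on $[0,\infty)$ and $Z_T > 0$ (note $T$ is almost surely finite by the strong law $Z_n/n \to \EE Y_1 > 0$), one has $f(Z_T) \leq f(0) < \infty$ pointwise, so $\sup_{x \leq 0} \EE^x f(Z_T) \leq f(0)$.

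For the lower bound, the plan is to reduce to uniform tightness of the law of the overshoot $Z_T$ as $x$ varies over $(-\infty, 0]$. Indeed, if $\sup_{x \leq 0} \PP^x(Z_T > M) \to 0$ as $M \to \infty$, then picking $M$ with this supremum at most $1/2$, and invoking the monotonicity and positivity of $f$, yields $\EE^x f(Z_T) \geq f(M)\,\PP^x(Z_T \leq M) \geq f(M)/2 > 0$ uniformly in $x \leq 0$.

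To establish this tightness, I reformulate via the zero-started walk $V_n := Y_1 + \cdots + Y_n$: under $\PP^x$, $Z_T$ has the same law as the overshoot $V_{M_a} - a$ of $V$ past the level $a := |x|$, where $M_a := \inf\{n : V_n > a\}$. Since $V_{M_a-1} \leq a$, this overshoot is dominated by the crossing jump: $V_{M_a} - a \leq Y_{M_a}$. Decomposing over the index $n$ realizing $M_a$ and using the independence of $Y_n$ from $(V_0,\ldots,V_{n-1})$, I obtain
$$ \PP(Y_{M_a} > M) = \int_M^\infty \mu({d}y)\, \nu_a^y, \qquad \nu_a^y := \EE\Bigl[\#\{1 \leq n \leq M_a : V_{n-1} \in (a-y, a]\}\Bigr], $$
and $\nu_a^y$ is dominated by the full renewal measure $U((a-y, a])$ of $V$, where $U(A) := \EE\bigl[\sum_{n \geq 0} \mathbb{1}\{V_n \in A\}\bigr]$.

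The crux of the proof, and the main technical obstacle, is the classical uniform renewal estimate $U([b, b+h]) \leq C(h+1)$ with $C$ independent of $b \in \RR$, valid for any real-valued random walk with positive drift and integrable increments (see e.g.\ Spitzer's book). Granting this, $\nu_a^y \leq C(y+1)$ uniformly in $a \geq 0$, and hence
$$ \PP(Y_{M_a} > M) \leq C \int_M^\infty (y+1)\, \mu({d}y) \xrightarrow[M \to \infty]{} 0 $$
by integrability of $Y_1$, uniformly in $a$. Combined with the reduction step, this establishes the lower bound and concludes the proof.
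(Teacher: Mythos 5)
Your proof is correct, but it takes a genuinely different route from the paper, and the comparison is worth spelling out. The paper splits the lower bound into two regimes: for $x$ in a bounded interval $[b,0]$ it bounds the overshoot tail by a Wald-type estimate, $\PP^x(Z_T>a)\leq \PP(Y_1>a)\,\EE^b T$, and for large negative $x$ it invokes the renewal theorem for the convergence in distribution of the overshoot as $x\to -\infty$ (Feller, via ladder epochs and ladder heights), taking care to treat the non-arithmetic and arithmetic cases separately. Your approach bypasses the case split entirely: you reduce the lower bound to a \emph{uniform tightness} statement, $\sup_{x\leq 0}\PP^x(Z_T>M)\to 0$, and obtain that tightness by dominating the overshoot by the crossing jump $Y_{M_a}$ and then applying the uniform renewal-measure estimate $U([b,b+h])\leq C(h+1)$. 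Your computation $\PP(Y_{M_a}>M)\leq \int_M^\infty U((a-y,a])\,\mu(dy)$ is a clean and correct use of independence, and the conclusion follows from integrability of $Y_1$ exactly as you say.

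What each approach buys: the paper gets more (actual weak convergence of the overshoot law) at the cost of having to split cases and handle arithmetic vs.\ non-arithmetic distributions; you get only tightness, but that is all the lemma needs, and your argument is regime-free. The one point you should nail down is the key estimate $\sup_b U([b,b+h])\leq C(h+1)$, which you flag as the ``crux'' and attribute to Spitzer. Spitzer's book is largely confined to lattice walks, so that reference is not ideal for the general real-valued case. A clean derivation is via the Wiener--Hopf occupation-time factorization: write $U=U_H * U^-$, where $U_H$ is the renewal measure of the strict ascending ladder heights (a genuine renewal process on $(0,\infty)$, hence $U_H([b,b+h])\leq U_H([0,h])$ by subadditivity) and $U^-$ is the pre-$T_1$ occupation measure, supported on $(-\infty,0]$ with total mass $\EE T_1<\infty$ because the drift is positive. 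This gives $U([b,b+h])\leq U_H([0,h])\,\EE T_1$ uniformly in $b$. With that gap filled (or a reference such as Gut, \emph{Stopped Random Walks}, or Asmussen, \emph{Applied Probability and Queues}, supplied), your proof stands as a valid alternative to the paper's.
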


\begin{proof}
Since the random walk has positive drift, the exit time $T$ is almost surely finite, furthermore $\EE^x T<\infty$ for all $x\leq 0$. Since $Z_T>0$  and $f$ is non-increasing, we have  $\EE^x f(Z_{T})\leq f(0)$ for all $x\leq 0$. This proves the rightmost inequality of Lemma~\ref{lem:overshoot}. 

We now turn to the leftmost inequality side, which is the difficult part. Since
\begin{equation*}
   \EE^x f(Z_{T}) \geq \EE^x \left( f(Z_{T}), Z_T\leq a \right)\geq f(a) \PP^x(Z_T\leq a),
\end{equation*}
it suffices to find $a>0$ such that
\begin{equation*}
   \inf_{x\leq 0}\PP^x(Z_T\leq a)>0.
\end{equation*}
We shall first exhibit a lower bound when $x$ remains in a bounded interval $[b,0]$, $b<0$.
To do this, simply write
\begin{align*}
\PP^x(Z_T >a ) &= \sum_{n\geq 1} \PP^x(Z_n > a, T=n) \\
    & \leq \sum_{n\geq 1} \PP^x( Y_n > a, T>n-1) = \PP(Y_1 > a) \EE^x T.
\end{align*}
Let $b$ be an arbitrary negative number. Since $\EE^x T$ is non-increasing as $x\uparrow 0$, we have
\begin{equation*}
   \sup_{b\leq x\leq 0}\PP^x(Z_T >a) \leq \PP(Y_1 > a) \EE^b T.
\end{equation*}
Choosing $a>0$ such that $\PP(Y_1 >a) \EE^b T<1$, we obtain $\inf_{b\leq x\leq 0}\PP^x(Z_T \leq a)>0$.

It remains to lower bound $\EE^x f(Z_{T})$ for large negative $x$. To do this, we use a well-known consequence of the renewal theorem, which asserts that the overshoot $Z_T$ above $0$ converges in distribution as the initial state
$x$ goes to $-\infty$. To state this result precisely, let us introduce the ladder epochs $(T_k)_{k \geq 0}$ defined by
\begin{equation*}
   T_0=0 \quad\mbox{ and }\quad  T_{k+1}=\inf\{n> T_k \vert Z_n>Z_{T_k}\}, \quad k\geq 0,
\end{equation*}
and the corresponding ladder heights $H_k=Z_{T_k}$. It is clear that our exit time $T$ must occur at one of the ladder epochs, hence 
$Z_T=H_{\tau_+}$, where $\tau_+=\inf\{n>0 \vert H_n>0\} $. Now 
\begin{equation*}
   H_n=Z_0+\sum_{k=1}^n(Z_{T_k}-Z_{T_{k-1}}),
\end{equation*}
where the random variables  $Z_{T_k}-Z_{T_{k-1}}$ are i.i.d.\ and positive. 
According to \cite[XI.4, Eq.~(4.10), p.~370]{Fel71}, the overshoot $H_{\tau_+}$ above $0$ of the renewal process $(H_n)_{n\geq 0}$ converges in distribution as $x\to -\infty$. More precisely,
if the distribution of the ladder heights increments $Z_{T_k}-Z_{T_{k-1}}$ is non-arithmetic\footnote{A probability  distribution is arithmetic if it is concentrated on $\lambda \mathbb{Z}$ for some $\lambda>0$. In this case, the largest $\lambda$ with this property is called the \textit{span}.}, then for any bounded and continuous function $f:[0,\infty)\to \RR$, 
\begin{equation*}
   \lim_{x\to -\infty} \EE^xf(H_{\tau_+})=\frac{1}{\nu}\int_{0}^{\infty}f(t) G(t) dt,
\end{equation*}
where $G$ is the survival function of $Z_{T_k}-Z_{T_{k-1}}$ and $\nu$ is its expectation. Since the variable $Z_{T_k}-Z_{T_{k-1}}$ is positive and our function $f$ is also positive, the integral above is positive.

%It follows from the renewal theorem (in its alternative form stated in \cite[XI.1, p.~363]{Fel71}) that, if %the distribution of the ladder heights increments $Z_{T_k}-Z_{T_{k-1}}$ is non-arithmetic\footnote{A %probability  distribution is arithmetic if it is concentrated on $\lambda \mathbb{Z}$ for some $\lambda>0$. %In this case, the largest $\lambda$ with this property is called the \textit{span}.}, then
%\begin{equation*}
%   \lim_{x\to -\infty} \EE^xf(H_{\tau_+})=\frac{1}{\nu}\int_{0}^{\infty}f(t) G(t) dt,
%\end{equation*}
%where $G$ is the survival function of $Z_{T_k}-Z_{T_{k-1}}$ and $\nu$ is its expectation. See \cite[XI.4, %Eq.~(4.10), p.~370]{Fel71}. Since the variable $Z_{T_k}-Z_{T_{k-1}}$ is positive, the integral above is %positive. 

In case the distribution of $Z_{T_k}-Z_{T_{k-1}}$ is arithmetic with span $\lambda>0$, then
\begin{equation*}
   \lim_{n\to -\infty} \EE^{n\lambda}f(H_{\tau_+})=\frac{\lambda}{\nu}\sum_{k=0}^{\infty}f((k+1)\lambda) G(k\lambda).
\end{equation*}
The proof is exactly as that of \cite[XI.4, Eq.~(4.10), p.~370]{Fel71}, 
%the starting point being the renewal equation (4.8) in \cite[XI.4, p.~369]{Fel71}
when specialized to the case of arithmetic distributions, see \cite[XI.1, Eq.~(1.19), p.~362]{Fel71}\footnote{Note that there is a misprint in \cite[XI.1, Eq.~(1.19), p.~362]{Fel71}: for $x\in [0,\lambda)$, the indices $j$ should start at $0$.}. Here again the limit is positive, since at least $G(0)>0$. So, there exists $n_0<0$ such that $\inf_{n<n_0}\EE^{n\lambda}f(H_{\tau_+})>0$. Now, for $n<0$ and $x\in ((n-1)\lambda, n\lambda]$, the exit time $\tau_+$ when the process is started at $x$ is identical to that when started at $n\lambda$, hence
\begin{equation*}
    \EE^{x}f(H_{\tau_+})=\EE^{n\lambda}f(H_{\tau_+}-(n\lambda-x))\geq \EE^{n\lambda}f(H_{\tau_+}).
\end{equation*}
Therefore $\inf_{x<n_0\lambda}\EE^{x}f(H_{\tau_+})>0$.
\end{proof}

\appendix

\section{Properties of pyramids and proof of Remark~\ref{rem:choice}}
\label{sec:supp_mat}

The main objective of the appendix is to prove that the value of $\rho$ in Theorem~\ref{thm:inner_drift} doesn't depend on the set $S$ as long as $K=\cap_{u\in S} D_{u}$, as mentioned in Remark~\ref{rem:choice}. Along the way of showing this, we shall recall several statements on pyramids. To make the paper self-contained and for the convenience of the reader, we will briefly prove these technical results.

Recall that a subset $K$ of $\mathbb{R}^d$ is a closed pyramid if there is a finite subset $S$ of the sphere $\mathbb{S}^{d-1}$ such that
$K=\cap_{u\in S} D_u$, where
$D_u$ stands for the closed half-space $\{y\in\RR^d \vert \sclr{y}{u}\geq 0\}$, see Figure~\ref{fig:examples_introduction}. In other words, $K$ is a finite intersection of homogeneous closed half-spaces. A closed pyramid is clearly a closed convex cone.

In this section, we will show (Lemma~\ref{lem:pyramids_carac})\ the following straightforward-looking result: 
if $K$ is a closed pyramid with non-empty interior\footnote{The result is not true in general if the interior of $K$ is empty. Consider for example the cone $K=\{0\}\times [0,\infty)$ in $\mathbb{R}^2$. It can be written as $\cap_{u\in S}D_u$, with $S=\{(1,0), (-1,0), (0,1)\}$,
and this $S$ is minimal w.r.t.~cardinality, but $\widetilde{S}=\{(1,0), (-1,0), (1/\sqrt{2},1/\sqrt{2})\}$ is adapted to $K$ as well.}, there is a finite set $S_0$ such that $K=\cap_{u\in S_0}D_u$ and $S_0$ is minimal w.r.t.~inclusion, i.e.
\begin{equation*}
    K=\cap_{u\in S}D_u\Rightarrow S_0\subset S.
\end{equation*}
From this, we will deduce (Lemma~\ref{lem:wheretofindthemax}) that 
\begin{equation}
\label{eq:explainingthemax}
  \max_{u\in S'}\min_{z\in K^*}L(t_u+z)=\max_{u\in S_0'}\min_{z\in K^*}L(t_u+z),
\end{equation}
thus explaining why the expression for $\rho$ in Theorem~\ref{thm:inner_drift} doesn't depend on the particular choice of the set $S$, provided that $K=\cap_{u\in S} D_{u}$.

\subsection{Some facts about pyramids}

To begin, let $K$ be any non-empty closed convex cone. It is well known that $K$ is the intersection of the homogeneous closed half-spaces $D_u$ which contain it (see \cite[Cor.~11.7.1]{Roc70}). Since the condition $K\subset D_u$ is clearly equivalent to $u\in K^*$, it follows that
\begin{equation*}
   K=\bigcap_{u\in K^*} D_u.
\end{equation*}

From now on, we will assume $K$ has a \textit{non-empty interior} and focus on the structure of the dual cone $K^*$.
Since $K$ contains a ball $B(x_0, r)$, any non-zero $y\in K^*$ satisfies $\sclr{y}{x_0+ru}\geq 0$ for all $u\in\mathbb{S}^{d-1}$, and this condition is equivalent to $\sclr{y}{x_0}\geq r \Vert y\Vert$. This implies that the cone $C=K^*$ is \textit{salient}, i.e.~there exists $v\in \mathbb{S}^{d-1}$ such that $\sclr{y}{v}>0$ for all $y\in C\setminus\{0\}$.

Salient convex cones have the following property:

\begin{lem}
\label{lem:cone_generated_by_its_section}
If a closed convex cone $C$ is salient, then there exists an affine hyperplane $H$, not containing the origin, such that $C\cap H$ is compact and generates $C$, i.e.~$C=\{\lambda y \vert \lambda \geq 0, y\in C\cap H\}$).
\end{lem}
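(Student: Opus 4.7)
The plan is to use the salience vector $v$ from the hypothesis to build the hyperplane explicitly: I would take
\[
   H=\{y\in\mathbb{R}^d \vert \sclr{y}{v}=1\}.
\]
This is an affine hyperplane not containing the origin, and setting $C\cap H$ as the candidate compact section is the natural guess, since salience says $v$ is strictly positive on $C\setminus\{0\}$.

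First I would verify that $C\cap H$ generates $C$. Given any non-zero $y\in C$, salience gives $\lambda:=\sclr{y}{v}>0$, so $y/\lambda$ lies in $C$ (because $C$ is a cone) and in $H$ (by construction). Hence $y=\lambda\cdot(y/\lambda)$ exhibits $y$ as a non-negative multiple of an element of $C\cap H$, which is the required generation property. The zero vector is handled by $\lambda=0$ applied to any fixed element of $C\cap H$ (assuming $C\cap H$ is non-empty, which follows from the existence of some non-zero element in $C$; and if $C=\{0\}$ the statement is vacuous modulo choosing any hyperplane avoiding $0$).

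The harder part is compactness. Closedness of $C\cap H$ is immediate, being the intersection of two closed sets. For boundedness I would argue by contradiction: if there were a sequence $(y_n)$ in $C\cap H$ with $\Vert y_n\Vert\to\infty$, then the unit vectors $y_n/\Vert y_n\Vert$ belong to the closed cone $C$ (since $C$ is a cone) and lie on the compact sphere $\mathbb{S}^{d-1}$, so by extracting a subsequence they converge to some $y^*\in C\cap \mathbb{S}^{d-1}$, in particular $y^*\neq 0$. But $\sclr{y_n/\Vert y_n\Vert}{v}=1/\Vert y_n\Vert\to 0$, forcing $\sclr{y^*}{v}=0$ and contradicting salience. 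This compactness-via-compactification argument is the only delicate point; the rest is bookkeeping.

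I expect no further obstacle; the proof proposal is essentially the one above. The main conceptual point is that salience is precisely the right condition to make the ``cross-section at height $1$ along $v$'' bounded, which is the classical way to turn a pointed/salient cone into the cone over a compact convex base.
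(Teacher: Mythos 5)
Your proof is correct and follows essentially the same approach as the paper: you use the same hyperplane $H=\{y\,\vert\,\sclr{y}{v}=1\}=v+[v]^\perp$, and both arguments rest on the compactness of $C\cap\mathbb{S}^{d-1}$ together with salience. The only cosmetic difference is that the paper obtains compactness of $C\cap H$ as the continuous image of $C\cap\mathbb{S}^{d-1}$ under $y\mapsto y/\sclr{y}{v}$, whereas you establish boundedness by a sequential-compactness contradiction; these are two presentations of the same idea.
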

\begin{proof}
The set $C\cap \mathbb{S}^{d-1}$ is compact and generates $C$. Consider the affine hyperplane $H=v+[v]^{\perp}$, where $v$ is as in the definition of a salient cone. The mapping
$\phi:C\cap \mathbb{S}^{d-1}\to C\cap H$ given by $\phi(y)=y/\sclr{y}{v}$ is continuous and onto, therefore $C\cap\mathbb{S}^{d-1}$ is compact. Moreover $C\cap H$ generates $C$, since any non-zero $y$ in $C$ can be written as $y =\lambda \phi(y)$ with $\lambda=\sclr{y}{v}$.
\end{proof}

\begin{figure}
    \centering
    \includegraphics[scale=0.3]{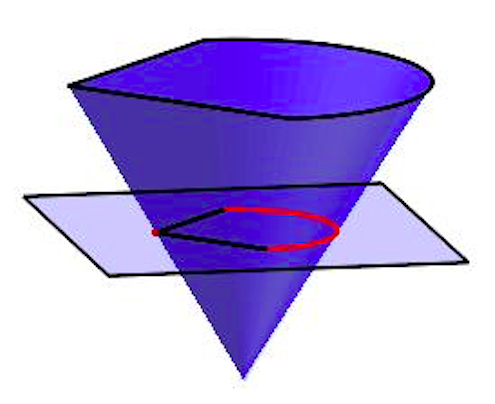}
    \vspace{-5mm}
    \caption{A salient cone $C$ and the extreme points (in red) of $C\cap H$}
    \label{fig:salient_cone}
\end{figure}

Take $H$ as in the lemma above and denote by $\E$ the set of extreme points of the convex set $C\cap H$ (see Figure~\ref{fig:salient_cone}). Since $C\cap H$ is compact, it follows from Minkowski-Steinitz Theorem (see \cite[Cor.~18.5.1]{Roc70})\ that any of its points $a$ can be written as a convex combination of extreme points:
\begin{equation*}
   a=\sum_{i=1}^p \lambda_i a_i,
\end{equation*}
with $a_i\in \E$, $\lambda_i\geq 0$ and $\sum_{i=1}^p \lambda_i=1$. 
From this we define the set of extremal directions of $C$ as 
\begin{equation*}
   \E_d(C)=\big\{a/ \Vert a\Vert \,\vert\, a \in \E \bigr\}.
\end{equation*}
It can be seen that this set does not depend on the particular choice of $H$, and has the following intrisic characterisation: $u\in \mathbb{S}^{d-1}$ is an extremal direction of $C$ if and only if it can not be written as a combination $\alpha a+\beta b$ with $\alpha,\beta >0$ and $a,b\in C$, $a\not=b$.
Since $C\cap H$ generates $C$, it follows that any point $u$ in $C$ may be expressed as
\begin{equation*}
   u=\sum_{i=1}^p \mu_i u_i,
\end{equation*}
with $u_i\in \E_d(C)$ and $\mu_i\geq 0$. This implies $D_u\supset D_{u_1}\cap D_{u_2} \cap \cdots \cap D_{u_p}$.

Applying this to the salient cone $K^*$, we obtain the following representation for $K$:
\begin{equation}
\label{eq:minimal_rep_pyramid}
K=\bigcap_{u\in K^*} D_u=\bigcap_{u\in \E_d(K^*)} D_u.
\end{equation}

\begin{lem}
\label{lem:pyramids_carac}
A closed convex cone $K$ with non-empty interior is a polyhedral cone (or pyramid) if and only if the set $\E_d(K^*)$ is finite.
In this case, for any set $S$  such that $K=\cap_{u\in S}D_u$, we have
\begin{enumerate}
\item $\E_d(K^*)\subset S$,
\item any vector in $S$ is a non-negative linear combination of vectors of $\E_d(K^*)$.
\end{enumerate}
\end{lem}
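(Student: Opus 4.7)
The plan hinges on a duality identity: whenever $K=\cap_{u\in S}D_u$ with $S$ a finite subset of $\mathbb S^{d-1}$, one has $K^*=\mathrm{cone}(S)$, the set of non-negative linear combinations of elements of $S$. This follows by dualizing the intersection using $(D_u)^*=\mathbb R_+ u$, together with the fact that the conical hull of a finite set is closed. Since $K$ has non-empty interior, $K^*$ is salient (as established just before Lemma~\ref{lem:cone_generated_by_its_section}), so all the material on extremal directions and on the compact section $K^*\cap H$ applies to $K^*$.

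For the equivalence, the direction ``$\mathcal E_d(K^*)$ finite $\Rightarrow$ $K$ is a pyramid'' is immediate from equation \eqref{eq:minimal_rep_pyramid}. For the converse, I assume $K=\cap_{u\in S}D_u$ with $S$ finite and choose an affine hyperplane $H$ as in Lemma~\ref{lem:cone_generated_by_its_section}. Using $K^*=\mathrm{cone}(S)$ together with the projection $\phi:y\mapsto y/\langle y,v\rangle$, a direct computation shows that each point of $K^*\cap H$ is a convex combination of the finitely many points $\phi(u)$, $u\in S$. Hence $K^*\cap H$ is a polytope; its set of extreme points is the image of $\mathcal E_d(K^*)$ under $\phi$, which is therefore finite, and consequently $\mathcal E_d(K^*)$ is finite.

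Assume now that $K$ is a pyramid. Statement (2) is the easier part: for each $u\in S$, $K\subset D_u$ gives $u\in K^*$, and $K^*$ is generated by $\mathcal E_d(K^*)$ via the same argument as that leading to \eqref{eq:minimal_rep_pyramid}, so $u$ is a non-negative linear combination of vectors of $\mathcal E_d(K^*)$. For statement (1), fix $v\in\mathcal E_d(K^*)$ and write $v=\sum_{u\in S}\lambda_u u$ with $\lambda_u\geq 0$, possible since $K^*=\mathrm{cone}(S)$. Suppose towards a contradiction that some $u_0\in S$ with $\lambda_{u_0}>0$ is not proportional to $v$. Set $a=\lambda_{u_0}u_0$ and $b=v-a$; both lie in $K^*$ and $a\neq 0$, while $b=0$ would force $v$ to be proportional to $u_0$, a contradiction. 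The decomposition $v=a+b$ thus writes $v$ as a sum of two elements of $K^*$, with $a$ not on the ray $\mathbb R_+v$, which contradicts $v$ being on an extreme ray. Hence every contributing $u$ is proportional to $v$; salience of $K^*$ excludes $-v$, and the unit norm condition then yields $u=v$, so $v\in S$.

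The only delicate step is this final extremality argument in (1): one must carefully set up the split $v=a+b$ so as to trigger the intrinsic characterization of extreme rays, with a non-zero component $a$ off the ray through $v$. Everything else reduces to the duality identity $K^*=\mathrm{cone}(S)$ and the compactness of $K^*\cap H$ already exploited in Lemma~\ref{lem:cone_generated_by_its_section}.
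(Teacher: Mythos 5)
Your proof is correct and rests on the same duality engine as the paper's, namely that $K=\cap_{u\in S}D_u$ dualizes to $K^*=\mathrm{cone}(S)$ (closedness of finitely generated cones being the key input), but you route the finiteness argument differently. The paper first passes to a cardinality-minimal $T\subset S$ with $K=\cap_{u\in T}D_u$, identifies $K^*=\mathrm{cone}(T)$, and then invokes minimality to argue $\E_d(K^*)\subset T$; as a by-product it gets $T=\E_d(K^*)$. You instead work with $S$ directly and deduce finiteness of $\E_d(K^*)$ from the observation that $K^*\cap H$ is the convex hull of the finitely many points $\phi(u)$, $u\in S$, hence a polytope with finitely many extreme points — a clean, self-contained route that never needs the minimal $T$. (Strictly speaking this step is redundant once you have proved item~(1), since $\E_d(K^*)\subset S$ already gives finiteness, but there is no harm in it.) Your argument for item~(1) — write $v=\sum_{u\in S}\lambda_u u$, suppose some $u_0$ with $\lambda_{u_0}>0$ is not proportional to $v$, split $v=a+b$ with $a=\lambda_{u_0}u_0\in K^*\setminus\{0\}$ off the ray $\RR_+v$ and $b=v-a\in K^*\setminus\{0\}$ — is a correct use of the extreme-ray property; note that the paper's stated characterization (``$\alpha a+\beta b$ with $\alpha,\beta>0$, $a\neq b$'') is worded imprecisely, and what you actually (and correctly) use is the intrinsic version: a decomposition of $v$ into a sum of two non-zero elements of $K^*$, at least one of which leaves $\RR_+ v$, is impossible. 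The final reduction via salience and unit norm to $u_0=v\in S$, and item~(2) from $S\subset K^*$ and the generation of $K^*$ by $\E_d(K^*)$, match the paper. Overall the proposal is sound; the only thing you lose relative to the paper is the bonus identification of the minimal defining set $T$ with $\E_d(K^*)$, which the paper records but does not subsequently need.
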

\begin{proof}
If $\E_d(K^*)$ is finite, then the representation \eqref{eq:minimal_rep_pyramid} shows that $K$ is a finite intersection of half-spaces. 
Conversely, suppose $K=\cap_{u\in S} D_u$ with $S$ finite and let $T\subset S$ be such that $K=\cap_{u\in T} D_u$ and minimal with respect to cardinality.  Write $T=\{u_1,u_2,\ldots,u_p\}$ and consider the set 
\begin{equation*}
   A=\left\{\sum_{i=1}^p \lambda_i u_i \vert  \lambda_1,\lambda_2, \ldots,\lambda_p\geq 0\right\}.
\end{equation*}
This set is clearly a convex cone, and it can be seen that it is a closed set (see \cite[Prop.~1]{Stu93} or  \cite[Thm~14.1]{Roc70}).
It is straightforward that $y$ belongs to $A^*$ if and only if $\sclr{y}{u_i}\geq 0$ for all $1\leq i\leq  p$. In other words,
\begin{equation*}
    A^*=\bigcap_{i=1}^p D_{u_i}=K.
\end{equation*}
Since $A$ is a closed convex cone, $A=(A ^*)^*=K^*$, see Consequence~1 in \cite{Stu93} or \cite[Thm~14.1]{Roc70}. Now, it is easily seen, using the intrisic characterisation of an extremal direction together with the minimality of the set $T$, that any extremal direction of $A$ must be one of the $u_i$'s. Therefore $\E_d(K^*)$ is a subset of $T$, hence finite. (In fact, by minimality $T=\E_d(K^*)$.)

Any vector in $S$ is a non-negative linear combination of vectors of $\E_d(K^*)$, as $S$ is necessarily a subset of $K^*$.
\end{proof}

\subsection{Proof of Remark~\ref{rem:choice}}

We first prove a technical result, which exactly captures the situation encountered in Theorem~\ref{thm:inner_drift}.

\begin{lem}
\label{lem:wheretofindthemax}
Let $K$ be a closed pyramid with non-empty interior and $F, G: K^* \to \RR$ two convex functions such that:
\begin{itemize}
\item $F(0)=1$ and for all $u\in K^* \setminus \{0\}$, either the equation $F(su)=1$ with $s>0$ has exactly one solution or $F(su)<1$ for all $s>0$;
\item $G$ is non-increasing along the rays of the cone $K^*$.
\end{itemize}
Let $M=K^* \cap \{ F=1\}\setminus\{0\}$. If $M$ is non-empty, the maximum of $G$ on $M$ is reached at some point $z$ in 
$M$ such that $u=z/\Vert z\Vert$ is an extremal direction of $K^*$.
\end{lem}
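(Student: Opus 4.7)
The plan is to reduce the optimization over $M$ to evaluating $G$ at the finitely many ``extremal-ray'' points $z_i=s_iu_i$, where $u_1,\ldots,u_p$ are the extremal directions of $K^*$ supplied by Lemma~\ref{lem:pyramids_carac} and $I_+$ is the set of indices $i$ for which $F(su_i)=1$ has a positive solution $s_i$. The whole statement will follow once I prove $G(z)\leq \max_{i\in I_+}G(z_i)$ for every $z\in M$, since each $z_i$ is already a point of $M$ whose direction is extremal.

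I first treat the \emph{good} case, when $z\in M$ can be decomposed as $z=\sum_{i\in I'}\lambda_iu_i$ with $I'\subseteq I_+$ and all $\lambda_i>0$. Setting $\alpha_i=\lambda_i/s_i$ and $\sigma=\sum\alpha_i$, the point $w=z/\sigma=\sum(\alpha_i/\sigma)z_i$ is a genuine convex combination of the $z_i$, so Jensen applied to $F$ gives $F(w)\leq 1$. Combined with the observation that, on the ray through $z$, the uniqueness hypothesis forces $F$ to be strictly $>1$ beyond $z$, this rules out $\sigma<1$, hence $\sigma\geq 1$. The radial monotonicity of $G$ then gives $G(z)=G(\sigma w)\leq G(w)$, and Jensen applied to $G$ finishes with $G(w)\leq \max_{i\in I'}G(z_i)$.

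For a general $z\in M$, I split $z=y_1+y_2$ where $y_1$ is a non-negative combination of the $u_i$ with $i\in I_+$ and $y_2$ is a non-negative combination of the remaining $u_j$. A short convexity argument (applying Jensen to a combination of $u_j$'s, each of which satisfies $F(su_j)<1$) shows that $F<1$ on the ``bad'' cone spanned by the remaining $u_j$, minus the origin, which forces $y_1\neq 0$. Applying Jensen to the identity $z=(1-t)\frac{y_1}{1-t}+t\frac{y_2}{t}$ for $t\in(0,1)$ --- once to $F$ (using $F(y_2/t)<1$) and once to $G$ (using the radial monotonicity of $G$ and letting $t\to 0^+$) --- yields $F(sy_1)>1$ for all $s>1$ and $G(z)\leq G(y_1)$. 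Combined with the Jensen bound $F(y_1/\sigma_1)\leq 1$ obtained by applying the good-case analysis to $y_1$ itself, the uniqueness hypothesis again forces $\sigma_1\geq 1$, and the intermediate value theorem produces $s_0\in(0,1]$ with $z^{**}:=s_0y_1\in M$. Since $s_0\leq 1$, radial monotonicity gives $G(y_1)\leq G(z^{**})$, and because $z^{**}$ lies in the good cone the first step applies, yielding $G(z^{**})\leq \max_{i\in I_+}G(z_i)$ and closing the chain $G(z)\leq G(y_1)\leq G(z^{**})\leq \max_{i\in I_+}G(z_i)$.

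The main obstacle I anticipate is ensuring the existence of the auxiliary point $z^{**}\in M$ on the ray through $y_1$: nothing in the setup guarantees a priori that this ray meets $M$ at all, since one could fear that $F>1$ strictly on the whole positive ray through $y_1$. It is precisely the uniqueness-of-positive-solution hypothesis, combined with the Jensen bound $F(y_1/\sigma_1)\leq 1$ stemming from the convex-combination representation of $y_1$ via the $z_i$'s, that rules out this scenario and makes the reduction go through.
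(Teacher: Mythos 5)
Your proof is correct, and while it rests on the same ingredients as the paper's argument (decompose into extremal directions, split into ``good'' directions where $F(su_i)=1$ has a positive solution and ``bad'' ones where $F<1$ on the whole ray, Jensen for $F$ to control a scaling factor, uniqueness of the positive solution to force that factor $\geq 1$, then radial monotonicity plus Jensen for $G$), the overall architecture differs in how the bad directions are removed. The paper writes $x=\sum_{u\in A}\mu_u x_u+\sum_{u\in B}\lambda_u u$, introduces a single $\epsilon$-dependent scaling $c_\epsilon=\mu+\epsilon\lambda$, shows $F(x/c_\epsilon)\leq 1$ (the bad terms entering as $F(u/\epsilon)<1$), deduces $c_\epsilon\geq 1$, and then the bad contribution $\frac{\epsilon\lambda}{c_\epsilon}\max_{u\in B}G(u/\epsilon)$ vanishes in the limit $\epsilon\to 0$ --- a one-pass argument requiring no continuity of $G$ beyond radial monotonicity. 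You instead split $z=y_1+y_2$ into a good and a bad piece, use the $t\to 0^+$ limit in $z=(1-t)\frac{y_1}{1-t}+t\frac{y_2}{t}$ to obtain both $F(sy_1)>1$ for $s>1$ and $G(z)\leq G(y_1)$ (the latter needing continuity of $G$ along the ray, which is automatic for a finite convex function), and then construct an auxiliary point $z^{**}=s_0y_1\in M$ on the good ray via the Jensen bound $F(y_1/\sigma_1)\leq 1$ and IVT, so that the already-treated all-good case finishes. The subtlety you flagged --- existence of $z^{**}\in M$ on the ray of $y_1$ --- is exactly right and you handled it correctly: convexity of $s\mapsto F(sy_1)$ together with $F(sy_1)>1$ for $s>1$ rules out the second branch of the dichotomy and pins the unique solution $s_0$ in $(0,1]$. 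The paper's single-pass version is shorter; your two-step reduction makes more explicit where the uniqueness hypothesis is invoked on the ray through $y_1$ rather than through $z$.
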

\begin{proof}
Let $A$ be the collection of $u\in \E_d(K^*)$ such that the equation $L(su)=1$ with $s>0$ has exactly one solution, and let $B=\E_d(K^*)\setminus A$. By hypothesis, if $u$ belongs to $B$, then $L(su)<1$ for all $s>0$. We first show that $A$ is non-empty, so that there exists at least one point $z=su$ in $M$ such that $z/\Vert z\Vert \in \E_d(K^*)$. To see this, let $x$ be any point in $M$. As an element of $K^*$, it can be written as a linear combination
\begin{equation}
\label{eq:decomposition_du_point}
x=\sum_{u\in \E_d(K^*) } \lambda_u u
\end{equation}
with  $\lambda_u\geq 0$.
Assume $A$ is empty and let $n$ denote the cardinality of $B$. Then, by convexity of $F$, 
\begin{equation*}
   F(x)=F\left(\frac{1}{n}\sum_{u\in B} n\lambda_u u\right)\leq \frac{1}{n}\sum_{u\in B} F(n \lambda_u u)<1.
\end{equation*}
This contradicts with $F(x)=1$, hence $A$ is non-empty.

For each $u\in A$, denote by $x_u$ the unique point such that $F(x_u)=1$ and $x_u=su$ with $s>0$. Starting from~\eqref{eq:decomposition_du_point}, we can decompose $x$ as follows:
\begin{equation*}
   x=\sum_{u\in A} \mu_u x_u + \sum_{u\in B} \lambda_u u,
\end{equation*}
where $\mu_u\geq 0$ and $\lambda_u\geq 0$. Set $\mu=\sum_{u\in A}\mu_u$, $\lambda=\sum_{u\in B} \lambda_u$ and $c_\epsilon=\mu+\epsilon \lambda$, for a parameter $\epsilon>0$. Repeating the argument above shows that at least one of the $\mu_u$ is positive, hence $\mu>0$ and $c_\epsilon>0$.
By convexity of $F$, we have
\begin{align*}
F(x/c_\epsilon) &=F\left(\sum_{u\in A} \frac{\mu_u}{c_\epsilon} x_u + \sum_{u\in B} \frac{\epsilon\lambda_u}{c_\epsilon} (u/\epsilon)\right)\\
&\leq \sum_{u\in A} \frac{\mu_u}{c_\epsilon} F(x_u) + \sum_{u\in B} \frac{\epsilon\lambda_u}{c_\epsilon} F(u/\epsilon)\leq 1.
\end{align*}
 But, by hypothesis, the function $\phi(s)= F(sx)$ is convex and the equation $\phi(s)=1$ has only two non-negative solutions, namely $s=0$ and $s=1$, thus $\phi(s)>1$ for all $s>1$. Therefore $c_\epsilon$ must be larger than one. 
So, since $G$ is non-increasing along the rays of $K^*$, the value $G(x)$ is less than $G(x/c_\epsilon)$, and using the convexity exactly as above we obtain
\begin{align*}
G(x) & \leq \sum_{u\in A} \frac{\mu_u}{c_\epsilon} G(x_u) + \sum_{u\in B} \frac{\epsilon\lambda_u}{c_\epsilon} G(u/\epsilon)\\
& \leq \frac{\mu}{c_\epsilon} \max_{u\in A} G(x_u) + \frac{\epsilon\lambda}{c_\epsilon} \max_{u\in B}G(u/\epsilon).
\end{align*}
As $\epsilon$ goes to $0$, $c_{\epsilon}$ goes to $\mu$ and $\max_{u\in B}G(u/\epsilon)$ remains bounded (by $\max_{u\in B}G(u)$ for $\epsilon<1$ for example). Therefore, letting $\epsilon\to 0$ in the inequality above leads to
\begin{equation*}
   G(x)\leq \max_{u\in A} G(x_u).
\end{equation*}
This proves the lemma.
\end{proof}

As an application, let $K=\cap_{u\in S}D_u$ be a closed pyramid with non-empty interior and $L$ be the Laplace transform of the distribution $\mu$ satisfying the hypotheses of Theorem~\ref{thm:inner_drift}.
Set 
\begin{equation*}
    F(x)=L(-x) \quad \text{and}\quad  G(x)=\inf_{z\in K^*}L(-x+z).
\end{equation*}
The proof of Lemma~\ref{lem:new_drift_on_gamma} shows that the function $F$ satisfies the first hypothesis of Lemma~\ref{lem:wheretofindthemax} and that $M$ is non-empty. Now $G$ is also a convex function (because $L$ is convex). To see why it is non-increasing along the rays of $K^*$, fix $v\in K^*$ and $a<b$. Since $K^*$ is a convex cone, it is also a semi-group, therefore $(b-a)v+K^*\subset K^*$, and adding $-bv$ on each side leads to $-av+K^*\subset -bv+K^*$. It follows that $G(av)\geq G(bv)$.

Now define $T$ as the set of all $u$ in $K^*$ such that the equation $F(su)=1$ with $s>0$ has exactly one solution, which we write $-\alpha_u$ with $\alpha_u<0$. Then $t_u=\alpha_u u$ denotes the same point as in Theorem~\ref{thm:inner_drift} because $L(\alpha_u u)=F(-\alpha_u u)=1$.  The set $M$ in Lemma~\ref{lem:wheretofindthemax} is equal to $\{-t_u | u\in T\}$ and the lemma asserts that 
\begin{equation*}
   \max\{G(-t_u) | u\in T\}= \max\{G(-t_u) | u\in T\cap \E_d(K^*)\}.
\end{equation*}
More explicitly,
\begin{equation*}
   \max_{u\in T} \inf_{z\in K^*} L(t_u+ z)= \max_{u\in T\cap \E_d(K^*)} \inf_{z\in K^*} L(t_u+ z).
\end{equation*}
But the set $S'$ in Theorem~\ref{thm:inner_drift} satisfies
\begin{equation*}
    T\cap \E_d(K^*)\subset S'\subset T,
\end{equation*}
hence
\begin{equation*}
   \max_{u\in S'} \inf_{z\in K^*} L(t_u+ z)= \max_{u\in T\cap \E_d(K^*)} \inf_{z\in K^*} L(t_u+ z).
\end{equation*}
This explains \eqref{eq:explainingthemax}.

\section*{Acknowledgments}

We would like to thank Marc Peign\'e for interesting discussions on overshoots of random walks. We warmly thank an anonymous referee for their thorough reading and their interesting comments and suggestions.

 \bibliographystyle{unsrt}
 \makeatletter
 \def\@openbib@code{\itemsep=-3pt}
 \makeatother

\end{document}